\documentclass[12pt]{amsart}

\newtheorem{theorem}{Theorem}[section]
\newtheorem{lemma}[theorem]{Lemma}

\newtheorem{proposition}[theorem]{Proposition}
\newtheorem{definition}[theorem]{Definition}

\usepackage[bookmarks]{hyperref}
\usepackage{cite}
\usepackage[makeroom]{cancel}
\usepackage{graphicx,amscd,amsthm,amsfonts,amsopn,amssymb,verbatim,enumerate,xcolor}

\usepackage[letterpaper,left=1in,top=1in,right=1in,bottom=1in]{geometry}
\usepackage{xcolor}

\def\nint{\mathop{\diagup\kern-13.0pt\int}}

\def\supp{{\operatorname{supp}}}

\def\bas{\begin{align*}}
\def\eas{\end{align*}}
\def\bi{\begin{itemize}}
\def\ei{\end{itemize}}

\def\emph#1{{\it #1}}


\DeclareMathOperator{\sgn}{sgn}

\DeclareMathOperator{\Tr}{Tr}
\DeclareMathOperator{\curl}{curl}
\theoremstyle{definition}
\newtheorem{remark}[theorem]{Remark}
\newtheorem{thm}[theorem]{Theorem}

\numberwithin{equation}{section}

\title[Gallery waves for the vacuum irrotational compressible Euler equations]{Counterexamples to Strichartz estimates and gallery waves for the irrotational compressible Euler equation in a vacuum setting}
\author{Ovidiu-Neculai Avadanei}

\begin{document}
\begin{abstract}
We consider the free boundary problem for the irrotational compressible Euler equation in a physical vacuum setting. By using the irrotationality condition in the Eulerian formulation of Ifrim and Tataru, we derive a formulation of the problem in terms of the velocity potential function, which turns out to be an acoustic wave equation that is widely used in solar seismology. This paper is a first step towards understanding what Strichartz estimates are achievable for the aforementioned equation. Our object of study is the corresponding linearized problem in a model case, in which our domain is represented by the upper half-space. For this, we investigate the geodesics corresponding to the resulting acoustic metric, which have multiple periodic reflections next to the boundary. Inspired by their dynamics, we define a class of whispering gallery type modes associated to our problem, and prove Strichartz estimates for them. By using a construction akin to a wave packet, we also prove that one necessarily has a loss of derivatives in the Strichartz estimates for the acoustic wave equation satisfied by the potential function. In particular, this suggests that the low regularity well-posedness result obtained by Ifrim and Tataru might be optimal, at least in a certain frequency regime. To the best of our knowledge, these are the first results of this kind for the irrotational compressible Euler equations in a physical vacuum.
\end{abstract}
\keywords{compressible Euler equations, free boundary problems, Strichartz estimates, gallery modes, vacuum boundary}
\subjclass[2020]{Primary: 35Q75; Secondary: 35L10, 35Q35, 35P05, 35L81}
\maketitle
\tableofcontents
\section{Introduction}
In this paper we are concerned with the evolution for the free boundary problem in a compressible gas setting. In one of the most common models, the gas occupies a domain $\Omega_t$ with boundary $\Gamma_t$ at a given time $t$, and is described by the (Eulerian) variables $(\rho,v)$, where $\rho\geq 0$ is the \text{density}, and $v$ is the \text{velocity}. The evolution is described by what are known as the \text{compressible Euler} equations
\begin{equation}\label{rawEuler}
\begin{cases}\begin{aligned}
  &\rho_t+\nabla\cdot(\rho v)=0\\
  &\rho(v_t+(v\cdot\nabla)v)+\nabla p=0
  \end{aligned}
  \end{cases}
\end{equation}
In the irrotational case, we shall momentarily see that the velocity admits a potential which solves a wave equation with respect to the acoustic metric, and the same will be true for the linearized counterpart of the Euler system.  An interesting question here is to which extent Strichartz estimates hold for the linearized problem, as well as for the full nonlinear equations. Our paper is a first step towards understanding the first part of this question in a model case. Specifically, it turns out that the geometry generated by the resulting acoustic metric gives rise to multiply reflecting geodesics along the boundary, which in turn allow us to construct solutions that force derivative losses in the estimates.

We shall  assume that the pressure is given by constitutive laws of the form
\begin{equation}
\begin{aligned}
    p(\rho)=\rho^{\kappa+1},\kappa>0.
    \end{aligned}
\end{equation}
This system can be regarded as a coupled one consisting of a wave equation for the variables $(\rho,\nabla\cdot v)$, and a transport equation for the \textit{vorticity} $\omega=\curl v$. Throughout this paper, we shall assume that our gas is irrotational, so that $\omega=0$, and $v=\nabla\phi$, where $\phi$ is a \text{potential}. This will allow us to work in a slightly different interpretation, one in which $\phi$ also solves a wave equation, which will be derived in Section \ref{s:Stream function formulation}, along with a linearized counterpart. In both interpretations, a key quantity is the \textit{speed of sound} $c_s$, which is the propagation speed for the wave components, and is given by
\begin{equation}
    c_s^2=p'(\rho).
\end{equation}

In our model, we shall allow the density $\rho$ to vanish, a scenario which corresponds to \textit{vacuum states}, so that the gas will occupy the domain $\Omega_t=\{(t,x)|\rho(t,x)>0\}$, with a moving boundary $\Gamma_t$. In contrast to the fluid case, the density will vanish on the free boundary $\Gamma_t$, which means that it can be described as
\begin{align*}
  \Gamma_t=\{(t,x)|\rho(t,x)=0\}.  
\end{align*}
In general, one expects a single stable nontrivial physical regime, known as \textit{physical vacuum}, which corresponds to the situation where $d(x,\Gamma_t)\approx c_s^2(t,x)$. Heuristically, if the sound speed has a faster decay rate, one expects the particles on the boundary to move linearly and independently, which is a regime that can only last for a short time. A slow decay rate for the speed of sound would correspond to an infinite initial acceleration of the boundary, which is another regime that cannot last for a long time. On the other hand, the physical vacuum regime allows the free boundary to move with a bounded velocity and acceleration while interacting with the interior, ensuring that linear waves with speed $c_s$ can reach the free boundary $\Gamma_t$ in finite time.

Historically, two main approaches have been adopted in fluid dynamics: an Eulerian one,  in which the reference frame is fixed, and the particles are moving, and a Lagrangian one, in which the converse is true, in the sense that the particles are stationary, while the frame is moving. Both of these interpretations have been broadly used and studied in the context of the compressible Euler equations in the full space $\mathbb{R}^d$, where the local well-posedness problem is well-studied and understood. 

For example, in the full space $\mathbb{R}^d$, the compressible Euler equations have been traditionally regarded as a symmetric hyperbolic system, which means that the results of Hughes-Kato-Marsden from \cite{Hughes-Kato-Marsden} apply here (see also Majda's work in \cite{Majda}). It follows that the problem is locally well-posed in $H^s$ in the Hadamard sense, where $s>\frac{d}{2}+1$, with the continuation  criterion
\begin{align*}
    \int_0^T\|\nabla(\rho,v)(t)\|_{L^\infty}\,dt<\infty
\end{align*}
In the irrotational case, the general Strichartz estimates of Smith-Tataru from \cite{Smith-Tataru} apply directly, which allows one to improve the local well-posedness regularity threshold to $s>\frac{d+1}{2}$, when $d=3,4,5$. In the rotational case, it is not yet known what would be the right condition to impose on the vorticity that would guarantee a similar result; see the results of Disconzi-Luo-Mazzone-Speck \cite{Disconzi-Luo-Mazzone-Speck}, Wang \cite{Wang}, Andersson-Zhang \cite{Andersson-Zhang},\cite{Andersson-Zhang2}, and Zhang \cite{Zhang}.

On the other hand, until recently, the free boundary problem corresponding to the physical vacuum had not been studied in the Eulerian setting, and all results had been obtained in Lagrangian coordinates, at high regularity, and in indirectly defined spaces.

The first ones to develop a fully Eulerian low regularity approach to the well-posedness of the compressible Euler equations in a physical vacuum were Ifrim and Tataru in \cite{Ifrim-Tataru}. In their paper, they:
\begin{itemize}
    \item proved the uniqueness of solutions, with minimal assumptions on regularity: $(\rho,v)\in\text{Lip}$. They also showed that the solutions are stable, in the sense that the distance between different solutions can be propagated in time;
    \item  developed an Eulerian Sobolev function space structure tailored to this problem;
    \item proved sharp, scale invariant energy estimates in the aforementioned spaces, while also providing a minimal continuation criterion for the regularity of solutions ($v\in\text{Lip}$);
    \item constructed regular solutions in the Eulerian framework in high regularity spaces
    \item developed a nonlinear Littlewood-Paley decomposition, which allowed them to construct rough solutions as limits of smooth solutions, while also proving continuous dependence on the initial data. Their low regularity threshold morally corresponds to the $\frac{d}{2}+1$ one obtained by Hugh-Kato-Marsden, being one derivative above scaling.
\end{itemize}
Together with Marcelo Disconzi, they later proved counterparts of these results for the relativistic Euler equations in a vacuum setting in \cite{Disconzi-Ifrim-Tataru}.

In the fluid incompressible case, analogous results were obtained by Ifrim, Pineau, Tataru, and Taylor in \cite{Ifrim-Pineau-Tataru-Taylor}. One has to note that this problem concerns the fluid cases, which poses different difficulties. For example, the density does not tend to $0$ as we approach the boundary. In particular, one must also assume the Taylor sign condition (this is necessary, as proved by Ebin in \cite{Ebin}), and include the Taylor coefficient in the estimates. We must also mention that the first ones to adopt an Eulerian approach for fluid equations in a free boundary setting were Shatah and Zheng in \cite{Shatah-Zeng,Shatah-Zeng2, Shatah-Zeng3}. However, their main focus is on the free boundary Euler equations with surface tension, and even though they also prove the existence of  a solution to the pure gravity problem in
the zero surface tension limit, their argument seems to rely on the boundedness of the curvature, which would in turn require a higher degree of regularity than in \cite{Ifrim-Pineau-Tataru-Taylor}. It is easier to draw comparisons between the latter and the memoir of Wang, Zhang, Zhao, and Zeng \cite{Wang-Zhang-Zhao-Zeng}. There, they prove the uniqueness and existence of solutions at a level of regularity which is one derivative above scaling, but their approach is restricted to graph domains with unbounded curvature. By comparison, the results of Ifrim-Pineau-Tataru-Taylor from \cite{Ifrim-Pineau-Tataru-Taylor} not only apply to more general domains with potentially more complicated geometries, but they also obtained the first proof of continuity of solutions with respect to the initial data, along with an enhanced uniqueness result, a construction of rough solutions as unique limits of smooth ones, refined low regularity energy estimates with pointwise geometric control parameters that only require very limited regularity, a new proof of the existence of smooth solutions, and an essentially scale invariant continuation criterion, akin to the one obtained by Beale-Kato-Majda for the incompressible Euler equations on the whole space. Therefore, their approach is very different from the one in \cite{Wang-Zhang-Zhao-Zeng} which in turn relies on the one adopted in the works of Alazard-Burq-Zuily \cite{Alazard-Burq-Zuily, Alazard-Burq-Zuily2}. See also de Poyferr\'e \cite{dePoyferre}.

In this paper, we shall consider the irrotational compressible Euler equations in a vacuum regime, in the Eulerian setting. 
\subsection{Notations and the conserved energy}
The material derivative $D_t$ is defined as the derivative along the particle flow, and is given by the relation
\begin{align*}
    D_t=\partial_t+v\cdot\nabla.
\end{align*}
The equations \eqref{rawEuler} can be rewritten as
\begin{equation}
\begin{cases}\begin{aligned}
  &D_t\rho+\rho\nabla\cdot v=0\\
  &\rho D_tv+\nabla p=0
  \end{aligned}
  \end{cases}
\end{equation}
If we differentiate the equation for $\rho$ once again, we obtain a wave equation
\begin{align*}
    D_t^2-\rho\nabla\cdot(\rho^{-1}p'(\rho)\nabla\rho)=\rho[(\nabla\cdot v)^2-\Tr(\nabla v(\nabla v)^T)]
\end{align*}
with propagation speed $c_s$. One can obtain a similar equation for $\nabla \cdot v$.

For the vorticity $\omega=\curl v$, one can obtain the transport equation
\begin{align*}
    D_t\omega=-\omega(\nabla v)-(\nabla v)^T\omega.
\end{align*}
These show that the compressible Euler equations can indeed be interpreted as a coupled system consisting of a pair of variables $(\rho,\nabla\cdot v)$ which solve a wave equation, and a transport equation for the vorticity $\omega=\curl v$. However, we note that the irrotationality condition will allow us to derive a different formulation for our problem, in terms of a stream function. We are going to discuss this in detail in Section 3.

The equations admit a conserved energy, given by
\begin{align*}
    E&=\int_{\Omega_t}\frac{1}{2}\rho |v|^2+\rho h(\rho)\,dx,
\end{align*}
where $h$ is the specific enthalpy, and is defined by
\begin{align*}
    h(\rho)=\int_0^\rho\frac{p(\lambda)}{\lambda^2}\,d\lambda.
\end{align*}
In a suitable setting, this energy can be interpreted as a Hamiltonian, see Chorin-Marsden \cite{Chorin-Marsden}, Ebin-Marsden \cite{Ebin-Marsden}, and Marsden-Ratiu-Weinstein \cite{Marsden-Ratiu-Weinstein}.
\subsection{The good variables}
Ifrim and Tataru recast the equation using a pair of variables that turned out to be more convenient to use. We shall briefly recall their justification.

It is not difficult to see that when $\kappa=1$, the variables $(\rho,v)$ are quite convenient to use. However, we can make a better choice when $\kappa\neq 1$. In order to see this, by using the constitutive law, we get that the sound speed is given by $c_s^2=(\kappa+1)\rho^\kappa$. We would like this to have linear behavior towards the boundary and exhibit decay, which shows that using $r=r(\rho)$ given by $r'=\rho^{-1}p'(\rho)$ might be a better choice. 

Indeed, this is tantamount to setting $\displaystyle r=\frac{\kappa+1}{\kappa}\rho^\kappa$, and it turns out that the equations can indeed be simplified, taking the form
\begin{equation}\label{Euler}
\begin{cases}\begin{aligned}
  &D_tr+\kappa r(\nabla\cdot v)=0\\
  &D_tv+\nabla r=0
  \end{aligned}
  \end{cases}
\end{equation}
These are the equations that we shall consider throughout this paper, coupled with the condition $\curl v=0$, thus taking the form
\begin{equation}\label{irrotational Euler}
\begin{cases}\begin{aligned}
  &D_tr+\kappa r(\nabla\cdot v)=0\\
  &D_tv+\nabla r=0\\
  &\curl v=0.
  \end{aligned}
  \end{cases}
\end{equation}
We shall also need their linearized counterparts
\begin{equation}\label{linearized Euler}
\begin{cases}\begin{aligned}
  &D_ts+w\cdot\nabla r+\kappa s(\nabla\cdot v)+\kappa r(\nabla\cdot w)=0\\
  &D_tw+(w\cdot\nabla)v+\nabla s=0,
  \end{aligned}
  \end{cases}
\end{equation}
respectively 
\begin{equation}\label{linearized irrotational Euler}
\begin{cases}\begin{aligned}
  &D_ts+w\cdot\nabla r+\kappa s(\nabla\cdot v)+\kappa r(\nabla\cdot w)=0\\
  &D_tw+(w\cdot\nabla)v+\nabla s=0\\
  &\curl w=0.
  \end{aligned}
  \end{cases}
\end{equation}

In the next section, we shall see that they admit stream function formulations, in the sense that here exist stream functions $\phi$ and $\psi$, such that 
  \begin{equation}\label{Stream function formulation}
  \begin{aligned}
  v&=\nabla\phi\\
      D_t\phi&=\frac{|v|^2}{2}-r,
      \end{aligned}
      \end{equation}
      with the velocity potential $\phi$ also satisfying
      \begin{equation}\label{Stream function formulation wave equation}
  \begin{aligned}
      D_t^2\phi&+\nabla r\cdot\nabla\phi-\kappa r\Delta\phi=0.
      \end{aligned}
  \end{equation}
 
  The linearized velocity also admits a potential $\psi$, which turns out to satisfy the following equations:
  \begin{equation}\label{Linearized stream function formulation}
  \begin{aligned}
  w&=\nabla\psi\\
      D_t\psi&=-s.
      \end{aligned}
      \end{equation}
     Moreover, the linearized velocity potential $\psi$ also satisfies the following acoustic wave equation:
      \begin{equation}\label{Linearized stream function formulation wave equation}
  \begin{aligned}
      D_t^2\psi&-\nabla r\cdot\nabla\psi-\kappa r\Delta\psi=\kappa s(\nabla\cdot v).
      \end{aligned}
  \end{equation}
The previous acoustic wave equation also appears in helioseismology, describes solar oscillations, and is a simplified scalar equation introduced by Gizon-Barucq-Duruflé-Hanson-Leguèbe-Birch-Chabassier-Fournier-Hohage-Papini in \cite{GBDHLBCFHP}; see also \cite{AHN1,AHN2,MHFG }, as well as Muller's PhD thesis for results on the uniqueness of the associated inverse problem under various hypotheses. Even though this is a simplified version of the equation of stellar oscillations (see \cite{LO} and \cite{C}) and Galbrun's equation, the latter being introduced in \cite{Galbrun}, it still captures a large proportion of the solar dynamics; see also \cite{BMMPP,HB,BJM}, as well as \cite{HH,HLS,H}. Historically, there have been two approaches in the field of seismology: a global one, in which one solves inverse spectral problems, in order to be able "hear" the shape of the Sun (one of the results of this approach is the radially symmetric model, see \cite{CGT,Cetal}), and a local one, which relies on data such as travel times and correlations of oscillations, as well as on solving inverse boundary problems in order to construct a 2D or 3D image of the solar interior (see \cite{DJHP,Getal}). Applications of the field of helioseismology include space weather prediction, and understanding the solar cycle.
\subsection{Function spaces and energies}
The conserved energy takes the form
\begin{align*}
    E&=\int_{\Omega_t}r^{\frac{1-\kappa}{\kappa}}\left(r^2+\frac{\kappa+1}{2}rv^2\right)\,dx
\end{align*}
This motivated Ifrim and Tataru to introduce the base energy space $\mathcal{H}$ with norm
\begin{align*}
    \|(s,w)\|^2_{\mathcal{H}}=\int_{\Omega_t}r^{\frac{1-\kappa}{\kappa}}\left(s^2+\kappa rw^2\right)\,dx,
\end{align*}
which is associated to the linearized equations \eqref{linearized Euler}
for functions $(s,w)$ defined almost everywhere in the gas domain $\Omega_t$.

For higher regularity spaces, they were motivated by the second order wave equation, whose leading order operator $D_t^2-\kappa r\Delta$ is naturally associated to the acoustic metric
\begin{align*}
    g=r^{-1}dx^2
\end{align*}
This allowed them to  define the higher order Sobolev spaces $\mathcal{H}^{2k}$ for elements $(s,w)\in\mathcal{D}'(\Omega_t)\times\mathcal{D}'(\Omega_t)$, with norm
\begin{align*}
    \|(s,w)\|^2_{\mathcal{H}^{2k}}&=\sum_{\substack{|\beta|\leq 2k}}^{|\beta|-\alpha\leq k}\|r^\alpha\partial^\beta(s,w)\|^2_{\mathcal{H}},
\end{align*}
where $0\leq\alpha\leq k$. For fractional $k$, the spaces $\mathcal{H}^{2k}$ can be defined by interpolation. See \cite{Ifrim-Tataru} for the precise definition and a detailed discussion.

 Our problem admits the following scaling law:
    \begin{align*}
        r_\lambda(t,x)&=\lambda^{-2}r(\lambda t,\lambda^2x)\\
        v_\lambda(t,x)&=\lambda^{-1}v(\lambda t,\lambda^2x),
    \end{align*}
    in the sense that if $(r,v)$ is a solution, then so is $(r_\lambda,v_\lambda)$.

    We note that the critical exponent $k$ for which the homogeneous counterpart of $\mathcal{H}^{2k}$ is invariant to scaling is
    \begin{align*}
        2k_0=d+1+\frac{1}{\kappa}
    \end{align*}
 In \cite{Ifrim-Tataru}, Ifrim and Tataru defined the control parameters 
    \begin{align*}
        A=\|\nabla r-N\|_{L^\infty}+\|v\|_{\dot{C}^{\frac{1}{2}}},
    \end{align*}
    which is associated with the critical Sobolev exponent $2k_0$, and
    \begin{align*}
        B=\|\nabla r\|_{\tilde{C}^{0,\frac{1}{2}}}+\|\nabla v\|_{L^\infty},
    \end{align*}
    where
    \begin{align*}
    \|f\|_{\tilde{C}^{0,\frac{1}{2}}}=\sup_{\substack{x,y\in\Omega_t}}\frac{|f(x)-f(y)|}{r(x)^{\frac{1}{2}}+r(y)^{\frac{1}{2}}+|x-y|^{\frac{1}{2}}}
    \end{align*}
    They also introduced the phase space
    \begin{align*}
        \mathbf{H}^{2k}=\{(r,v)|(r,v)\in\mathcal{H}^{2k}\}.
    \end{align*}
    Due to the fact that the $\mathcal{H}^{2k}$ norm depends directly on $\Omega_t$ (hence on $r$), this should be regarded as an infinite dimensional manifold. Their main local well-posedness result is as follows:
    
\begin{thm}\label{Initial Theorem}[Ifrim, Tataru,\cite{Ifrim-Tataru}]
    The system \eqref{Euler} is locally well-posed in the space $\mathcal{H}^{2k}$ for $k\in\mathbb{R}$ satisfying
    \begin{align*}
    2k>2k_0+1.
    \end{align*}
\end{thm}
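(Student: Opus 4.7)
The plan is to establish local well-posedness by (i) deriving sharp, scale invariant energy estimates in $\mathcal{H}^{2k}$ for both the full system \eqref{Euler} and its linearization \eqref{linearized Euler}, with bounds governed by the control parameters $A$ and $B$; (ii) constructing high regularity solutions by an approximation scheme that closes on these estimates; and (iii) upgrading to rough $\mathcal{H}^{2k}$ solutions at the threshold $2k > 2k_0+1$ via a nonlinear Littlewood--Paley decomposition, with continuous dependence supplied by the linearized bounds.

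First I would derive a base $\mathcal{H}$-level energy identity for the linearized system, using the acoustic metric $g=r^{-1}dx^2$ as the natural vehicle for integration by parts. Crucially, boundary contributions at $\Gamma_t$ vanish because $r=0$ there, so in a suitable setting one obtains
\begin{align*}
\frac{d}{dt}\|(s,w)\|_{\mathcal{H}}^2 \lesssim_A \|(s,w)\|_{\mathcal{H}}^2.
\end{align*}
Higher order estimates are then produced by differentiating the equations with the mixed $r\partial$ and $\partial$ combinations that define $\mathcal{H}^{2k}$, reducing each tier to a linearized system with source terms, and iterating. Commutator terms are absorbed using the definition of $B$, and Gronwall yields a priori control of $\|(r,v)\|_{\mathcal{H}^{2k}}$ in terms of $\int_0^T B(t)\,dt$. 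Uniqueness at low regularity (already noted by Ifrim--Tataru to hold in $\mathrm{Lip}$) then follows by applying the $\mathcal{H}$-level estimate to the difference of two solutions, reinterpreted as a linearized solution.

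For existence, I would first produce smooth solutions by a Picard iteration on a suitably regularized version of \eqref{Euler}, closing a standard energy argument at some large $k$, and then passing to a limit via the uniform bounds. To reach the optimal threshold $2k>2k_0+1$, I would decompose the initial data into dyadic frequency layers using a \emph{nonlinear} Littlewood--Paley partition adapted to the vacuum boundary --- necessary because the spaces $\mathcal{H}^{2k}$ themselves depend on the unknown $r$. Each layer generates a smooth solution, and a frequency envelope argument combined with the linearized difference estimate produces the rough solution as a limit in $\mathcal{H}^{2k}$, together with continuous dependence on the initial data in the strong topology.

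The main obstacle is precisely the degeneracy of the wave operator at the free boundary: the weight $r^{(1-\kappa)/\kappa}$ and the vanishing of the principal coefficient $\kappa r$ in $D_t^2-\kappa r\Delta$ preclude the use of standard flat Littlewood--Paley theory, and the phase space $\mathbf{H}^{2k}$ is an infinite dimensional manifold rather than a linear space, since different $r$'s produce distinct $\mathcal{H}^{2k}$. Executing the frequency decomposition and limit procedure in a way that is consistent across nearby choices of $r$, and that delivers truly sharp estimates depending only on $A$ and $B$ (and not on higher regularity auxiliary norms), is where the bulk of the technical effort lies; this is the content of the nonlinear Littlewood--Paley machinery of Ifrim--Tataru.
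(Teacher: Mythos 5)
This statement is quoted from Ifrim--Tataru \cite{Ifrim-Tataru}; the paper itself offers no proof, only the citation and a summary (in the introduction) of the main ingredients of the original argument: scale invariant energy estimates controlled by $A$ and $B$, construction of regular solutions, and a nonlinear Littlewood--Paley decomposition producing rough solutions as limits of smooth ones with continuous dependence. Your outline reproduces exactly that strategy, so it is consistent with the approach the paper attributes to the original source --- though, to be clear, it remains a strategy sketch of a substantial external theorem rather than a self-contained proof.
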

One of the key ingredients in their proof consisted of deriving energy estimates of the form
\begin{align*}
\|(r,v)(t)\|_{\mathcal{H}^{2s}}&\lesssim e^{\int_0^tC(A)B(u)\,du}\|(r,v)(0)\|_{\mathcal{H}^{2s}}.
\end{align*}

For the previously defined control parameters, we have the following Sobolev estimates
\begin{align*}
    A&\lesssim \|(r,v)\|_{\mathbf{H}^{2k}},k>k_0\\
    B&\lesssim \|(r,v)\|_{\mathbf{H}^{2k}},k>k_0+\frac{1}{2}.
\end{align*}
The previous threshold in Theorem \ref{Initial Theorem} is required when one tries to control $B(t)$ pointwise in time. However, this quantity appears in the energy estimates in a time-averaged manner, so a natural question is whether one can take advantage of this in order to lower the regularity threshold. This in turn motivates us to investigate Strichartz estimates for our problem.
\subsection{Main results}
 For the rest of this paper, we shall consider the simplified model 
 \begin{align*}\Omega=\{x\in\mathbb{R}^d|x_d>0\}, 
 \end{align*} with $\displaystyle r=x_d$, and $\displaystyle v=0$. The corresponding wave equation satisfied by the linearized potential $\psi$ is
\begin{equation}\label{Acoustic wave}
   \partial_t^2\psi-\kappa x_d\Delta\psi-\partial_d\psi=0.
\end{equation}
In what follows, for a given vector $a\in\mathbb{R}^d$, we shall denote by $a'$ its tangential coordinates, corresponding to the first $d-1$ coordinates, and we also consider $\displaystyle g=\kappa^{-1}x_d^{-1}dx^2$, which is the acoustic metric.

\begin{definition}
    Let $d,q,r\geq 2$, $\gamma\in\mathbb{R}$.
    \begin{enumerate}
        \item We say that $\displaystyle (q,r)$ is \textit{wave-admissible} if
    \begin{align*}
        \frac{1}{q}+\frac{d-1}{2r}\leq \frac{d-1}{4}.
    \end{align*}
    If we have equality in the previous inequality, we say that $(q,r)$ is \textit{sharp wave-admissible}.
    \item We say that $(q,r,\gamma)$ is a \text{wave Strichartz triple} if $(q,r)$ is wave-admissible and
    \begin{align*}
        \frac{1}{q}+\frac{d}{2r}=\frac{d}{2}-\gamma.
    \end{align*}
    \end{enumerate}
\end{definition}
For reference, recall the Strichartz estimates for the wave equations in $\mathbb{R}^d$:
\begin{theorem}
    Let $d\geq 2$, $(q,r,\gamma)$ be wave-admissible, and let $u$ be a solution to the initial value problem
    \begin{align*}
        (\partial_t^2-\Delta)u&=0\\
        (u,u_t)(0)&=(u_0,u_1).
    \end{align*}
    Then,
    \begin{align*}
        \|u\|_{L_t^qL_x^r(\mathbb{R}\times\mathbb{R}^d)}&\lesssim \|u_0\|_{\dot{H}_x^{\gamma}(\mathbb{R}^d)}+\|u_1\|_{\dot{H}_x^{\gamma-1}(\mathbb{R}^d)}.
    \end{align*}
\end{theorem}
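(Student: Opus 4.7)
The plan is to follow the now-standard Littlewood--Paley plus $TT^*$ route, reducing the problem to a single frequency-localized dispersive estimate for the half-wave propagator $e^{it|D|}$. First I would split $u$ into its two half-wave components by writing
\begin{equation*}
u(t) = \frac{1}{2}\bigl(e^{it|D|}+e^{-it|D|}\bigr)u_0 + \frac{1}{2i|D|}\bigl(e^{it|D|}-e^{-it|D|}\bigr)u_1,
\end{equation*}
so that matters reduce to bounding $\|e^{\pm it|D|}f\|_{L^q_t L^r_x}$ by $\|f\|_{\dot H^\gamma}$. By a Littlewood--Paley decomposition $f = \sum_N P_N f$ with $P_N$ localizing to frequencies $|\xi|\sim N$, and by the scaling $f(x)\mapsto f(\lambda x)$ (which is compatible with the wave equation and with the defining relation $\frac{1}{q}+\frac{d}{2r}=\frac{d}{2}-\gamma$), I would reduce the proof to the single frequency estimate
\begin{equation*}
\|e^{\pm it|D|}P_1 f\|_{L^q_tL^r_x} \lesssim \|f\|_{L^2_x}.
\end{equation*}

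Next I would establish the basic dispersive bound
\begin{equation*}
\|e^{\pm it|D|}P_1 f\|_{L^\infty_x} \lesssim (1+|t|)^{-\frac{d-1}{2}}\|f\|_{L^1_x},
\end{equation*}
by writing the kernel as an oscillatory integral with phase $x\cdot\xi\pm t|\xi|$ against a smooth bump in $|\xi|\sim 1$, and applying stationary phase. Away from the light cone $|x|\sim |t|$ one integrates by parts to gain arbitrary decay; near the cone the Hessian of the phase on the unit sphere has rank $d-1$, producing the $(d-1)/2$ decay. Interpolating this with the trivial unitarity bound on $L^2_x$ gives the full family of fixed-time decay estimates.

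From here, the non-endpoint Strichartz estimates for sharp wave-admissible pairs with $q>2$ follow from the standard $TT^*$ argument combined with the Hardy--Littlewood--Sobolev inequality in the time variable; the endpoint $q=2$ (which exists for $d\geq 4$) is handled by the Keel--Tao bilinear interpolation/atomic decomposition argument. For non-sharp wave-admissible $(q,r)$, I would derive the estimate from the sharp case by applying Sobolev embedding in the spatial variable at each time slice. Finally, to reassemble the dyadic pieces into the full estimate with norm $\|u_0\|_{\dot H^\gamma}+\|u_1\|_{\dot H^{\gamma-1}}$, I would use the Littlewood--Paley square-function characterization of $L^r_x$ (with $r<\infty$) together with Minkowski's inequality in $L^q_t$, justified by the fact that $q,r\geq 2$, and absorb the constants in each dyadic piece using the scaling relation on $\gamma$.

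The main obstacle is the endpoint $(q,r) = \bigl(2, \tfrac{2(d-1)}{d-3}\bigr)$ for $d\geq 4$, where the HLS reduction fails and one must invoke the Keel--Tao bilinear argument; a secondary technical point is ensuring that the dyadic summation of the frequency-localized bounds is compatible with the $L^q_tL^r_x$ norm, which forces the restriction $r<\infty$ in the vector-valued Littlewood--Paley inequality and explains the exclusion of $(q,r,\gamma)=(4,\infty,\tfrac12)$ for $d=2$.
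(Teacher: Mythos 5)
Your proposal is correct, and it is precisely the standard proof. Note that the paper does not prove this statement at all: it is quoted purely for reference as a classical result (with the proofs living in the cited works of Strichartz, Ginibre--Velo, Lindblad--Sogge, and Keel--Tao), and the Littlewood--Paley reduction, stationary-phase dispersive estimate, $TT^*$ with Hardy--Littlewood--Sobolev, and Keel--Tao endpoint argument you outline are exactly the argument in those references.
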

We also define the Littlewood-Paley tangential projectors:
\begin{definition}
     $P_{x',\lambda}$ is the Fourier multiplier defined by the relation
\begin{align*}
    \widehat{P_{x',\lambda}}f(\xi')=\left(\psi\left(\frac{\xi'}{\lambda}\right)-\psi\left(\frac{2\xi'}{\lambda}\right)\right)\hat{f}(\xi'),
\end{align*}
where $\displaystyle \psi$ is a radial smooth function supported in the region $\displaystyle \{\xi'\in\mathbb{R}^{d-1}|\|\xi'\|\leq 2\}$, with $\psi=1$ when $\displaystyle \{\xi'\in\mathbb{R}^{d-1}|\|\xi'\|\leq 1\}$.
\end{definition}
Our first main result shows that these solutions do indeed provide counterexamples to Strichartz estimates in the context of the wave equation. More precisely, we have the following

\begin{theorem}\label{Counterexample Strichartz}
Let $(q,r,\gamma)$ be wave-admissible in dimension $d\geq 2$. Then, for every $\displaystyle s\in\mathbb{R}$ with $\displaystyle s<\frac{1}{q}+\gamma+\frac{1}{2\kappa}+1$, there exists a sequence of solutions $\displaystyle(\psi^j)_{j\geq 0}$ to the initial value problem
\begin{align*}
    (\partial_t^2-\kappa x_d\Delta-\partial_d)\psi^j&=0\\
    (\psi^j,\partial_t\psi^j)(0)=(\psi^j_0,\psi^j_1),
\end{align*}
with $\displaystyle (P_{x',2^{2j}}\psi^j_0,P_{x',2^{2j}}\psi^j_1)=(\psi^j_0,\psi^j_1)$, such that 
\begin{align*}
    \sup_{\substack{j\geq 0}}(\|(\psi^j_1,\nabla_x\psi^j_0)\|_{\mathcal{H}^{2s}})\leq 1,
\end{align*}
    and whenever $\displaystyle \alpha\in \left(0,\frac{1}{q}+\gamma+\frac{1}{2\kappa}+1-s\right)$,
we have
\begin{align*}
    \lim_{\substack{j\rightarrow\infty}}2^{-2j\alpha}\|\nabla^2\psi^j(t,\cdot)\|_{L_t^qL_x^r([0,1]\times\Omega)}=\infty.
\end{align*}

\end{theorem}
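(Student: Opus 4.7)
The counterexamples will be wave-packet superpositions of whispering-gallery eigenmodes of the acoustic wave operator, trapped in a boundary layer of width $\sim 2^{-2j}$ and propagating tangentially along $\{x_d=0\}$.

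The first step is separation of variables: I seek elementary solutions of \eqref{Acoustic wave} of the form $\psi(t,x)=e^{i(\tau t+\xi'\cdot x')}\phi(x_d)$. A direct substitution reduces the PDE to the singular Sturm--Liouville ODE
\[
\kappa x_d\phi''+\phi'+(\tau^{2}-\kappa|\xi'|^{2}x_d)\phi=0,\qquad x_d>0,
\]
with regularity at the boundary and integrability at infinity. Rescaling $y=|\xi'|x_d$ and setting $\phi(y)=e^{-y}\chi(2y)$ turns this into Kummer's confluent hypergeometric equation with parameter $c=1/\kappa$, whose integrable solutions are the generalized Laguerre polynomials $L_n^{(1/\kappa-1)}$. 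The admissible spectrum is $\mu_n:=\tau^{2}/|\xi'|=1+2\kappa n$, $n\in\mathbb{N}$, giving explicit eigenfunctions $\phi_n$ and dispersion relation $\tau_n(\xi')=\sqrt{(1+2\kappa n)|\xi'|}$; each $\phi_n$ decays super-exponentially past its turning point $y\sim\mu_n/\kappa$, so at tangential frequency $|\xi'|\sim\lambda=2^{2j}$ the mode lives effectively in the layer $x_d\lesssim\lambda^{-1}$. A key point is that the vacuum weight $x_d^{(1-\kappa)/\kappa}$ appearing in $\mathcal{H}$ coincides with the Laguerre orthogonality weight, so that $\int_0^{\infty}x_d^{(1-\kappa)/\kappa}|\phi_n(|\xi'|x_d)|^{2}\,dx_d\sim |\xi'|^{-1/\kappa}$.

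Next I would build the packet: fix a unit tangent $e'$, a mode index $n$ (say $n=0$), a tangential frequency width $\sigma\in[\lambda^{1/2},\lambda]$, and $\eta\in C_c^{\infty}(B_1)$, and set
\[
\psi^{j}(t,x)=\int_{\mathbb{R}^{d-1}}\eta\!\left(\tfrac{\xi'-\lambda e'}{\sigma}\right)e^{i(\tau_n(\xi')t+\xi'\cdot x')}\phi_n(|\xi'|x_d)\,d\xi'.
\]
The tangential Fourier support lies in the shell $\{|\xi'|\sim\lambda\}$, which yields $(P_{x',2^{2j}}\psi^{j}_0,P_{x',2^{2j}}\psi^{j}_1)=(\psi^{j}_0,\psi^{j}_1)$ for $j$ large. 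A stationary/non-stationary-phase analysis using $|\nabla_{\xi'}\tau_n|\sim\lambda^{-1/2}$ and $|\nabla^{2}_{\xi'}\tau_n|\sim\lambda^{-3/2}$ then shows that $\psi^{j}$ stays concentrated in a moving tube of tangential size $\max(\sigma^{-1},\lambda^{-1/2}t)$ and normal size $\lambda^{-1}$, travelling tangentially at group speed $\sim\lambda^{-1/2}$ with dispersion time $\sim\lambda^{3/2}/\sigma^{2}$.

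Finally, the norm comparison: a Plancherel computation in $x'$, combined with the Laguerre identity above and the acoustic scaling ($r^{\alpha}\partial^{\beta}\sim\lambda^{\beta-\alpha}$, with $\beta-\alpha\leq s$ in the definition of $\mathcal{H}^{2s}$), gives
\[
\|(\psi^{j}_1,\nabla\psi^{j}_0)\|^{2}_{\mathcal{H}^{2s}}\sim\sigma^{d-1}\lambda^{2s+1-1/\kappa}.
\]
A matching support-based lower bound for $\|\nabla^{2}\psi^{j}\|_{L^{q}_tL^{r}_x([0,1]\times\Omega)}$, obtained by balancing the $L^{q}_t$ integration against the dispersion window and optimising $\sigma\in[\lambda^{1/2},\lambda]$ and the mode index $n$, then produces via the wave-Strichartz identity $1/q+d/(2r)=d/2-\gamma$ a ratio of the form $\lambda^{1+\gamma+1/q+1/(2\kappa)-s}$ up to a logarithmic correction. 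Normalising so that $\|(\psi^{j}_1,\nabla\psi^{j}_0)\|_{\mathcal{H}^{2s}}\leq 1$ and choosing $\alpha$ below this threshold yields the claimed blow-up. The main technical obstacle is extracting the $1/(2\kappa)$ gain cleanly: this rests on identifying the vacuum weight with the Laguerre orthogonality measure and on a careful asymptotic analysis of $\phi_n$ near both the boundary and the turning point, after which matching the stationary-phase volume, the $L^{q}_t$ dispersion window and the $\mathcal{H}^{2s}$ derivative count against the wave-Strichartz identity is essentially bookkeeping.
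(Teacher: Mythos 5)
Your separation of variables, the Laguerre/Kummer identification of the normal profiles, the boundary-layer localization $x_d\lesssim 2^{-2j}$, and the $\mathcal{H}^{2s}$ computation $\|(\psi^j_1,\nabla\psi^j_0)\|^2_{\mathcal{H}^{2s}}\sim\sigma^{d-1}\lambda^{2s+1-1/\kappa}$ all match the paper's Sections 4, 6 and 7. The gap is in the packet itself. You fix a mode index $n$ and superpose with the genuine dispersive phase $e^{i\tau_n(\xi')t}$, $\tau_n(\xi')=\sqrt{\mu_n|\xi'|}$, and then try to optimize the frequency width $\sigma$ against the dispersion time $T_{\mathrm{disp}}\sim\lambda^{3/2}\sigma^{-2}$. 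If you carry out the bookkeeping you gesture at, this tradeoff never reaches the claimed threshold: with $T_{\mathrm{disp}}\geq1$ (i.e.\ $\sigma\leq\lambda^{3/4}$) the ratio of $\|\nabla^2\psi^j\|_{L^q_tL^r_x}$ to $\|(\psi^j_1,\nabla\psi^j_0)\|_{\mathcal{H}^{2s}}$ is $\lambda^{\frac{3}{2}-\frac{1}{r}+\frac{1}{2\kappa}-s+\frac{3(d-1)}{4}(\frac12-\frac1r)}$ at best, which falls short of $\lambda^{\frac{1}{q}+\gamma+\frac{1}{2\kappa}+1-s}$ by exactly $\lambda^{\frac{d-1}{4}(\frac12-\frac1r)}$; pushing $\sigma$ up to $\lambda$ and paying the dispersive decay in the $L^q_t$ integral still leaves a deficit of $\lambda^{\frac{1}{2q}}$. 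This is not a bookkeeping slip that more care would remove: your initial data are gallery modes of a single fixed $\mu=\mu_n$, so for sharp admissible pairs they are subject to the upper bound of Theorem \ref{Estimates for gallery modes}, which caps the growth at precisely the rate you can achieve and which is strictly below what Theorem \ref{Counterexample Strichartz} requires.

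The missing idea is the paper's choice of a \emph{monochromatic-in-time} packet: instead of fixing the mode and letting the eigenvalue $\lambda(\xi')=\mu_n|\xi'|$ vary over the shell, one fixes the eigenvalue $\lambda\equiv 2^{2j}$ and lets the mode parameter $\mu=2^{2j}/|\xi'|$ vary with $\xi'$ across $\supp a(2^{-2j}\cdot)$. The resulting
\begin{align*}
U^j(t,x)=e^{it2^j}\int e^{ix'\cdot\xi'}B\left(\tfrac{2^{2j}}{|\xi'|},|\xi'|x_d\right)a(2^{-2j}\xi')\,d\xi'
\end{align*}
is an exact solution whose time dependence is the single factor $e^{it2^j}$, so there is no dispersion at all on $[0,1]$: $\|U^j(t,\cdot)\|_{L^r_x}$ is constant in $t$ while the packet retains the full tangential concentration $\sigma=\lambda$ (spatial width $\lambda^{-1}$). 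That simultaneous achievement of $\sigma=\lambda$ and $T_{\mathrm{eff}}=1$ is exactly what your fixed-mode construction cannot deliver, and it is what produces the exponent $\frac{1}{q}+\gamma+\frac{1}{2\kappa}+1-s$. (As a side remark, your quantization $\mu_n=1+2\kappa n$ to force decay past the turning point is a reasonable—and arguably cleaner—selection of genuine eigenfunctions than the paper's continuous-$\mu$ Laguerre functions, but it is orthogonal to the issue above.)
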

We recall that our problem admits a scaling symmetry, which for the potential $\psi$ takes the form
\begin{align*}
    \psi_\lambda(t,x)=\lambda^{-3}\psi(\lambda t,\lambda^2x)
\end{align*}
In what follows, we show that our problem doesn't even admit Strichartz estimates corresponding to its own intrinsic scaling. In order to see this, we note that a Strichartz estimate for our problem would need to have the form
\begin{align*}
    \||\nabla_x|^{\gamma}\psi\|_{L_t^qL_x^r([0,1]\times\Omega)}\lesssim \|(\partial_t\psi(0),\nabla_x\psi(0))\|_{\mathcal{H}},
\end{align*}
which, upon imposing the scaling invariance, motivates the following
\begin{definition}
    Let $d,q,r\geq 2$, $\kappa>0$, and $\gamma\in\mathbb{R}$. We say that $\displaystyle (q,r,\gamma)$ is an \textit{Euler Strcihartz} triple if $(q,r)$ is wave-admissible and
    \begin{align*}
        \frac{1}{2q}+\frac{d}{r}=\frac{d}{2}+\frac{1}{2\kappa}+\gamma-1.
    \end{align*}
\end{definition}
Our second main result is the following
\begin{theorem}\label{Counterexample Strichartz 2}
Let $(q,r,\gamma)$ be an Euler Strichartz triple in dimension $d\geq 2$. Then, for every $\displaystyle s\in\mathbb{R}$ with $\displaystyle 2s<\frac{1}{q}-2\gamma$, there exists a sequence of solutions $\displaystyle(\psi^j)_{j\geq 0}$ to the initial value problem
\begin{align*}
    (\partial_t^2-\kappa x_d\Delta-\partial_d)\psi^j&=0\\
    (\psi^j,\partial_t\psi^j)(0)=(\psi^j_0,\psi^j_1),
\end{align*}
with $\displaystyle (P_{x',2^{2j}}\psi^j_0,P_{x',2^{2j}}\psi^j_1)=(\psi^j_0,\psi^j_1)$, such that 
\begin{align*}
    \sup_{\substack{j\geq 0}}(\|(\psi^j_1,\nabla_x\psi^j_0)\|_{\mathcal{H}^{2s}})\leq 1,
\end{align*}
    and whenever $\displaystyle \alpha\in \left(0,\frac{1}{2q}-\gamma-s\right)$,
we have
\begin{align*}
    \lim_{\substack{j\rightarrow\infty}}2^{-2j\alpha}\|\nabla^2\psi^j(t,\cdot)\|_{L_t^qL_x^r([0,1]\times\Omega)}=\infty.
\end{align*}
\end{theorem}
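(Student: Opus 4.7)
My plan is to derive Theorem \ref{Counterexample Strichartz 2} as a direct algebraic consequence of Theorem \ref{Counterexample Strichartz}. The two statements involve the \emph{same} acoustic wave equation, the \emph{same} tangential frequency localization at $2^{2j}$, the \emph{same} normalization $\|(\psi^j_1,\nabla_x\psi^j_0)\|_{\mathcal{H}^{2s}}\leq 1$, and the \emph{same} quantity $\|\nabla^2\psi^j\|_{L^q_t L^r_x([0,1]\times\Omega)}$ on the left hand side. The only difference is the threshold on $s$ and $\alpha$, which is formulated in terms of $\gamma$, and $\gamma$ obeys different scaling conventions in the two theorems. Consequently, the same wave-packet sequence that witnesses Theorem \ref{Counterexample Strichartz} should automatically serve as the witness for Theorem \ref{Counterexample Strichartz 2}, provided the Theorem \ref{Counterexample Strichartz 2} threshold is no larger.

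Given an Euler Strichartz triple $(q,r,\gamma)$, the pair $(q,r)$ is wave-admissible by hypothesis, and the unique corresponding wave-Strichartz exponent is $\gamma_W := \frac{d}{2}-\frac{1}{q}-\frac{d}{2r}$. Applying Theorem \ref{Counterexample Strichartz} to the triple $(q,r,\gamma_W)$ produces, for every $s$ strictly less than $\frac{1}{q}+\gamma_W+\frac{1}{2\kappa}+1$, a sequence $(\psi^j)_{j\geq 0}$ of solutions satisfying the tangential Littlewood--Paley support condition, the normalization $\sup_j\|(\psi^j_1,\nabla_x\psi^j_0)\|_{\mathcal{H}^{2s}}\leq 1$, and the divergence
\begin{align*}
\lim_{j\to\infty}2^{-2j\alpha}\|\nabla^2\psi^j\|_{L^q_t L^r_x([0,1]\times\Omega)}=\infty \quad\text{for every}\quad \alpha\in\bigl(0,\,\tfrac{1}{q}+\gamma_W+\tfrac{1}{2\kappa}+1-s\bigr).
\end{align*}

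The crux is the algebraic comparison of the two thresholds. Substituting the Euler-Strichartz relation $\frac{1}{2q}+\frac{d}{r}=\frac{d}{2}+\frac{1}{2\kappa}+\gamma-1$ to eliminate $\gamma$ gives
\begin{align*}
\frac{1}{2q}-\gamma \;=\; \frac{d}{2}-\frac{d}{r}+\frac{1}{2\kappa}-1,
\end{align*}
while a direct evaluation yields
\begin{align*}
\frac{1}{q}+\gamma_W+\frac{1}{2\kappa}+1 \;=\; \frac{d}{2}-\frac{d}{2r}+\frac{1}{2\kappa}+1,
\end{align*}
so that the difference between the two thresholds is exactly $\frac{d}{2r}+2>0$. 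Hence every $s<\frac{1}{2q}-\gamma$ also satisfies $s<\frac{1}{q}+\gamma_W+\frac{1}{2\kappa}+1$, and every $\alpha\in\bigl(0,\,\frac{1}{2q}-\gamma-s\bigr)$ lies in the larger open interval $\bigl(0,\,\frac{1}{q}+\gamma_W+\frac{1}{2\kappa}+1-s\bigr)$. The full conclusion of Theorem \ref{Counterexample Strichartz 2} then follows at once from that of Theorem \ref{Counterexample Strichartz}, with the same sequence $(\psi^j)$.

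The main obstacle is purely bookkeeping rather than analytic: one must carefully distinguish the wave-Strichartz $\gamma_W$, tied to the Euclidean scaling of the flat wave equation, from the Euler-Strichartz $\gamma$, tied to the intrinsic scaling $\psi_\lambda(t,x)=\lambda^{-3}\psi(\lambda t,\lambda^2 x)$ of the physical vacuum problem, and to track that the factor $2^{2j}$ on which both theorems are calibrated refers to tangential frequencies in both cases. The conceptual content of Theorem \ref{Counterexample Strichartz 2} is that the wave-Strichartz derivative loss already established in Theorem \ref{Counterexample Strichartz} is sharp enough to obstruct any Strichartz estimate adapted to the weaker intrinsic Euler scaling, so that no new wave packet construction is required.
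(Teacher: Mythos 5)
Your proposal is correct, and it reaches the conclusion by a genuinely different (reductive) route: the paper does not derive Theorem \ref{Counterexample Strichartz 2} from Theorem \ref{Counterexample Strichartz}, but instead takes the same normalized wave packets $\psi^j=2^{-2j(\frac{d}{2}-\frac{1}{2\kappa})}U^j$ from Section \ref{s:Construction of our counterexamples to the Strichartz estimates} and recomputes $\|\nabla^2\psi^j\|_{L^q_tL^r_x}\approx 2^{2j(\frac{1}{2q}-\gamma+2)}$ directly from Proposition \ref{Wave packet norm} together with the Euler--Strichartz relation, then compares with $\|(\psi^j_1,\nabla_x\psi^j_0)\|_{\mathcal{H}^{2s}}\approx 2^{2js}$. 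Your threshold comparison is sound: with $\gamma_W$ the wave exponent attached to the same $(q,r)$, the gap $\bigl(\frac{1}{q}+\gamma_W+\frac{1}{2\kappa}+1\bigr)-\bigl(\frac{1}{2q}-\gamma\bigr)$ is strictly positive, so every admissible $s$ and $\alpha$ for Theorem \ref{Counterexample Strichartz 2} is admissible for Theorem \ref{Counterexample Strichartz}, and since both theorems impose the identical normalization, frequency localization, and left-hand quantity, the witness sequence transfers verbatim. What your route buys is economy and a conceptual point the paper leaves implicit, namely that the Euler-scaling obstruction is formally weaker than the wave-scaling one; what the paper's direct computation buys is the exact growth rate $2^{2j(\frac{1}{2q}-\gamma+2)}$, hence divergence on the larger interval $\alpha\in\bigl(0,\frac{1}{2q}-\gamma+2-s\bigr)$, which is sharper quantitative information. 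One caveat worth flagging: you computed $\gamma_W=\frac{d}{2}-\frac{1}{q}-\frac{d}{2r}$ from the paper's stated definition of a wave Strichartz triple, whereas the paper's own proof of Theorem \ref{Counterexample Strichartz} implicitly uses the standard relation $\frac{1}{q}+\frac{d}{r}=\frac{d}{2}-\gamma$ (the $\frac{d}{2r}$ in the definition appears to be a typo); your gap computation gives $\frac{d}{2r}+2$ under the first convention and exactly $2$ under the second, so the positivity you need survives either reading and your argument is robust to this ambiguity.
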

\begin{remark}
    In particular, Theorems \ref{Counterexample Strichartz} and \ref{Counterexample Strichartz 2} imply that when $\displaystyle (q,r)=(2,\infty)$, we deduce that for every $\displaystyle s\in\mathbb{R}$ with $\displaystyle 2s<2k_0+1$, there exists a sequence of solutions $\displaystyle(\psi^j)_{j\geq 0}$ to the initial value problem
\begin{align*}
    (\partial_t^2-\kappa x_d\Delta-\partial_d)\psi^j&=0\\
    (\psi^j,\partial_t\psi^j)(0)=(\psi^j_0,\psi^j_1),
\end{align*}
with $\displaystyle (P_{x',2^{2j}}\psi^j_0,P_{x',2^{2j}}\psi^j_1)=(\psi^j_0,\psi^j_1)$, such that 
\begin{align*}
    \sup_{\substack{j\geq 0}}(\|(\psi^j_1,\nabla_x\psi^j_0)\|_{\mathcal{H}^{2s}})\leq 1,
\end{align*}
    and whenever $\displaystyle \alpha\in \left(0,k_0+\frac{1}{2}-s\right)$,
we have
\begin{align*}
    \lim_{\substack{j\rightarrow\infty}}2^{-2j\alpha}\|\nabla^2\psi^j(t,\cdot)\|_{L_t^2L_x^{\infty}([0,1]\times\Omega)}=\infty.
\end{align*}
\end{remark}
The key idea behind the proofs of Theorems \ref{Counterexample Strichartz} and \ref{Counterexample Strichartz 2} is that for every given frequency, there exist geodesics with multiple periodic reflections next to the boundary, and exact periodic solutions to the acoustic wave equations that are highly localized in tangential frequency and propagate along them. It also turns out that there exist similar solutions to the latter, which are still highly localized in tangential frequency and propagate along the aforementioned geodesics, at least up until a certain time of coherence. In analogy with Ivanovici \cite{Ivanovici}, we shall call them \textit{gallery waves} or \textit{gallery modes}. Of course, it is natural to also investigate the behavior of gallery waves for individual frequencies and modes. Our result is as follows:

\begin{theorem}\label{Estimates for gallery modes}
   Let $d\geq 2$, and $\displaystyle u_0$ a gallery wave corresponding to the mode $\mu$, where a precise definition is going to given in Section \ref{s:Whispering gallery modes}. Then, the solution $u$ to the initial value problem
   \begin{align*}
       (\partial_t^2-\kappa x_d\Delta-\partial_d)u&=0\\
       (u,u_t)(0)&=(u_0,0),
   \end{align*}
   where $\displaystyle u_0=P_{x',2^{2j}}u_0$, we have
   \begin{align*}
       \|u\|_{L_t^qL_x^r([0,T_0]\times\Omega)}&\lesssim \left(2^{2j}\right)^{\left(\frac{3d+1}{4}\right)\left(\frac{1}{2}-\frac{1}{r}\right)+\frac{1}{2\kappa}-1}\|(0,\nabla_xu_0)\|_{\mathcal{H}(\Omega)},
   \end{align*}
   whenever
   \begin{align*}
       \frac{1}{q}&=\frac{d-1}{2}\left(\frac{1}{2}-\frac{1}{r}\right),
   \end{align*}
   with $\displaystyle q,r\geq 2$.
\end{theorem}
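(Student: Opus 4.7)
The strategy is to decouple the analysis of a gallery mode into a tangential half-wave propagation at frequency $\lambda = 2^{2j}$ and a transverse eigenfunction profile that is thin compared to $\lambda^{-1}$, and then to combine a tangential Strichartz estimate with a transverse integration.

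\emph{Step 1: explicit gallery form and energy norm.} Following the definition in Section \ref{s:Whispering gallery modes}, a mode-$\mu$ gallery wave admits a representation
\begin{equation*}
u(t,x) = \int_{\mathbb{R}^{d-1}} a(\xi')\, e^{i(x'\cdot\xi' - \tau_\mu(\xi') t)}\, \phi_\mu(|\xi'|x_d)\, d\xi',
\end{equation*}
where $a$ is supported in $|\xi'|\sim\lambda$, $\phi_\mu$ is the $\mu$-th bounded eigenfunction of the transverse ODE that becomes $|\xi'|$-independent after the rescaling $y = |\xi'|x_d$, and the dispersion relation takes the form $\tau_\mu(\xi') = c_\mu\sqrt{|\xi'|}$ for a constant $c_\mu$ depending on $\mu$ and $\kappa$. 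A direct Plancherel and rescaling computation yields
\begin{equation*}
\|(0,\nabla_x u_0)\|_{\mathcal{H}}^2 \approx \lambda^{1-1/\kappa}\,\|a\|_{L^2_{\xi'}}^2,
\end{equation*}
with implicit constant depending on $\mu$ through the weighted norm $\int_0^\infty y^{1/\kappa}(|\phi_\mu|^2 + |\phi_\mu'|^2)\,dy$, which is finite thanks to the Airy-type decay of $\phi_\mu$.

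\emph{Step 2: tangential dispersive estimate and Strichartz.} The core analytic step is the bound
\begin{equation*}
\|U_\mu(t)\,P_{x',\lambda}\|_{L^1_{x'}\to L^\infty_{x'}} \lesssim \lambda^{3(d-1)/4}\,|t|^{-(d-1)/2},\qquad U_\mu(t) = e^{-it\tau_\mu(D_{x'})}.
\end{equation*}
After rescaling $\xi' = \lambda\eta$, the kernel becomes a semiclassical oscillatory integral with large parameter $\lambda$ and phase $x'\cdot\eta - (c_\mu t/\sqrt{\lambda})\sqrt{|\eta|}$; the Hessian of $|\eta|^{1/2}$ at $|\eta|=1$ has one negative radial eigenvalue and $d-2$ positive tangential eigenvalues, so nondegenerate stationary phase in all $d-1$ directions yields the displayed decay with prefactor $\lambda^{3(d-1)/4}$. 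Combining this with $L^2_{x'}$-isometry via the Keel--Tao machinery (dispersive rate $\sigma = (d-1)/2$) delivers, for every sharp wave-admissible pair $\tfrac{1}{q} = \tfrac{d-1}{2}\bigl(\tfrac{1}{2}-\tfrac{1}{r}\bigr)$,
\begin{equation*}
\|U_\mu(\cdot)\,f\|_{L^q_t L^r_{x'}} \lesssim \lambda^{3/(2q)}\,\|f\|_{L^2_{x'}}.
\end{equation*}

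\emph{Step 3: transverse integration and assembly.} Because $\phi_\mu(|\xi'|x_d)$ depends on $\xi'$ only through $|\xi'|$, a dyadic sub-decomposition of $a$ on thin annuli together with Taylor expansion in $|\xi'|/\lambda$ reduces the gallery wave to the tensor product $u \approx \phi_\mu(\lambda x_d)\cdot (U_\mu a)(t,x')$ modulo lower-order errors. For such a tensor product the $L^q_t L^r_x$ norm factorizes, which together with the tangential Strichartz estimate gives
\begin{equation*}
\|u\|_{L^q_t L^r_x} \lesssim \|\phi_\mu(\lambda x_d)\|_{L^r_{x_d}}\,\|U_\mu a\|_{L^q_t L^r_{x'}} \lesssim \lambda^{3/(2q) - 1/r}\,\|a\|_{L^2_{\xi'}}.
\end{equation*}
Inserting the relation from Step 1 and using the sharp admissibility identity to rewrite
\begin{equation*}
\tfrac{3}{2q} - \tfrac{1}{r} - \tfrac{1}{2}\Bigl(1 - \tfrac{1}{\kappa}\Bigr) = \tfrac{3d+1}{4}\Bigl(\tfrac{1}{2} - \tfrac{1}{r}\Bigr) + \tfrac{1}{2\kappa} - 1
\end{equation*}
yields exactly the exponent claimed in the theorem. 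The main obstacles are expected to be (i) the uniform stationary-phase analysis of Step 2 on the annulus $|\xi'|\sim\lambda$, with careful control of the constants coming from the Hessian and from the non-stationary regions, and (ii) handling the $|\xi'|$-dependence of $\phi_\mu(|\xi'|x_d)$ in Step 3, which is the usual price of localizing wave packets and is absorbed by dyadic refinement together with the smoothness of the eigenfunction profile.
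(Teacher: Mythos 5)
Your proposal is correct and follows essentially the same route as the paper: reduce to the tangential half-wave $e^{-it\sqrt{\mu|\nabla_{x'}|}}$, obtain the dispersive bound $\lambda^{3(d-1)/4}|t|^{-(d-1)/2}$ by nondegenerate stationary phase on the annulus, pass to Strichartz via $TT^{\ast}$/Keel--Tao (the paper uses Ivanovici's semiclassical version of this lemma), and recover the $\lambda^{-1/r}$ and $\lambda^{1/(2\kappa)-1/2}$ factors from the transverse integration and the $\mathcal{H}$-normalization, exactly as in the paper's Lemma \ref{Equivalence lemma for gallery modes}; your exponent bookkeeping checks out. The only cosmetic difference is that the paper replaces your tensor-product approximation in Step 3 by an exact two-sided norm equivalence based on the boundedness and positivity of the transverse profile $B(\mu,\cdot)$, which avoids the dyadic refinement you invoke.
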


One might expect to have this type of gallery waves solutions for the fully nonlinear equations as well. Another interesting question is to study gallery modes for the fully nonlinear equation, and to also analyze their nonlinear self-interactions.  Further questions to be considered are whether stationary solutions are stable upon perturbations with gallery modes, how two gallery modes could interact, and up to which time they are coherent.
\subsection{A heuristic description of gallery waves}
The behavior that we expect our gallery waves to have is consistent with the behavior of multiply reflecting geodesics on the boundary. It turns out that this phenomenon also characterizes the ray acoustics in the interior of the Sun, as noted in \cite{GBS}.

By applying the time Fourier transform to equation \eqref{Acoustic wave}, we get
\begin{align*}
    (-\kappa x_d\Delta_x'-\kappa x_d\partial_d^2-\partial_d-\tau^2)\tilde{\psi}=0
\end{align*}
Let $\xi'$ be the tangential frequency, and $\xi_d$ the normal frequency. Upon taking the Fourier transform in $x'$, we get

\begin{align*}
    (-\kappa x_d\partial_d^2-\partial_d+\kappa x_d|\xi'|^2-\tau^2)\tilde{\psi}=0.
\end{align*}

Heuristically, when $x_d\ll \tau^{-2}$, the uncertainty principle suggests that $\delta\xi_d\gg \tau^2$. Therefore, once we also restrict to $|\xi_d|\gtrsim |\xi'|$, we would have

\begin{align*}
    (-\kappa x_d\partial_d^2-\partial_d)\tilde{\psi}\approx 0
\end{align*}
This equation has two homogeneous solutions, $\tilde{\psi}=x_d^{\frac{1}{\kappa}-1}$ and $\tilde{\psi}=1$; assuming that $\kappa<1$, the finiteness of the energy
\begin{align*}
    \int x_d^{\frac{1}{\kappa}-1}(s^2+\kappa x_d|w|^2)\,dx
\end{align*}
implies that only $\tilde{\psi}=1$ is viable.  In particular, this would correspond to a solution which is roughly constant in the normal variable in the region $x_d\lesssim \tau^{-2}$. This resulting property of the solution is also the reason why we do not impose boundary conditions for the linearized acoustic equation.

Another interesting regime occurs when $x_d|\xi'|^2\gg\tau^2$. In this case, the elliptic effect will dominate in our equation for $\tilde{\psi}$, leading in particular to a very quick decay as $x_d$ grows.

We expect these two regimes to overlap when $\tau^2=x_d|\xi_d|^2$ and $\tau^2=x_d|\xi'|^2$. Fixing $x_d\approx 2^{-2j}$, the uncertainty principle implies $|\tau|\approx 2^j$, $|\xi'|\approx 2^{2j}$, and $|\xi_d|\approx 2^{2j}$. These correspond to a solution akin to a gallery mode, behaving like a bump function in both the normal and the tangential directions, and roughly periodic in time, with small variations and an oscillation period roughly equal to $2^{-j}$.This suggests that Ifrim and Tataru's result, Theorem \ref{Initial Theorem} from \cite{Ifrim-Tataru}, might be optimal at least in the frequency range where $\tau^2\lesssim  |\xi'|$. However, this leaves open the question of improving this result away from this range.

Classically, gallery modes arise as highly localized waves traveling along the boundary in convex domains in the study of the wave and Schr\"odinger equations. They were studied by Ivanovici in \cite{Ivanovici} in the context of finding counterexamples for Strichartz estimates in strictly convex domains with boundary; see also \cite{Ivanovici2, Ivanovici-Lebeau-Planchon, Ivanovici-Lebeau-Planchon2, Ivanovici-Lebeau-Planchon3, Ivanovici-Lebeau-Planchon4, Blair-Smith-Sogge}.

 \subsection{Further historical comments}
The compressible Euler equations have been studied for a long period of time, and have also received a great deal of interest from the physical side. Allowing the density to vanish in some regions, a regime which corresponds to vacuum states, adds significantly many layers of difficulty to the problem, due to the fact that the behavior of the gas is heavily influenced by the speed of sound at the boundary. Moreover, physical vacuum is also the natural boundary condition for compressible gases. Here, one should regard the space as being divided into a particle region $\Omega_t$, and a vacuum region, which are separated by a free boundary $\Gamma_t=\partial\Omega_t$ that evolves in time. One can identify two main scenarios here, depending on the behavior of the density, or equivalently, of the sound speed $c_s$ at the free boundary:
\begin{itemize}
    \item A fluid case, in which the density and the sound speed are assumed to have a nonzero positive limit at the free boundary.
    \item A gas scenario, in which the density tends to zero near the free boundary, which is going to be the case that we shall focus on in this paper.
\end{itemize}
Fluid flows were investigated in Christodoulou-Miao \cite{Christodoulou-Miao} and Lindblad \cite{Lindblad}, while the incompressible case was also studied by Lindblad-Luo \cite{Lindblad-Luo}. In the incompressible case, we once again note the more recent results of Ifrim-Pineau-Tataru-Taylor \cite{Ifrim-Pineau-Tataru-Taylor} that we have already discussed in this introduction.

Based on the relation between the speed of sound and the distance to the boundary of a given particle, one can distinguish three cases:
\begin{enumerate}
    \item[a)] A fast decay scenario, in which $c_s\lesssim d_{\Gamma_t}$. Here, the vacuum boundary is going to be expected to evolve linearly, preventing internal waves from reaching the boundary arbitrarily fast. This means that this scenario will survive for at least a short amount of time, and that one can also apply the known results on symmetric hyperbolic systems in order to analyze the local well-posedness of the problem, see for instance DiPerna \cite{DiPerna}, Chen \cite{Chen}, and Lions \cite{Lions}, as well as Kawashima-Makino-Ukai \cite{Kawashima-Makino-Ukai}, Liu-Yang \cite{Liu-Yang}, and Chemin \cite{Chemin}. This means that this scenario does not yield a genuine free boundary problem, and as Chemin \cite{Chemin} shows in the one dimensional case, the geometry breaks down in finite time.
    \item[b)] A slow decay scenario, in which $c_s\gg d_{\Gamma_t}$. In this case, the internal waves can reach the boundary arbitrarily fast, causing the internal flow to be strongly coupled with the one of the free boundary, thus rendering this case a genuine free boundary problem. A natural choice for the decay rates is represented by the family $c_s\approx d_{\Gamma_t}^\beta$, where $\beta\in(0,1)$. Among all of these, physical and mathematical considerations suggest that there exists a single stable decay rate, corresponding to $\beta=\frac{1}{2}$. The other cases are conjectured to be unstable and to instantly transition into the stable regime. However, no rigorous results have been proved in this sense, and it is also highly likely that the scenarios $\beta<\frac{1}{2}$ and $\beta>\frac{1}{2}$ differ significantly.
    \end{enumerate}

 In the physical vacuum case, the first setting that was historically considered was the one dimensional one in Coutand-Shkoller \cite{Coutand-Shkoller} and Jang-Masmoudi \cite{Jang-Masmoudi}. While Coutand and Shkoller prove some energy estimates and provide a procedure to construct solutions, the function spaces are not completely defined, and the initial data is not directly described. This matter is addressed by Jang and Masmoudi, who introduce the Lagrangian counterparts of Ifrim and Tataru's weighted Sobolev spaces, and prove the existence and uniqueness of solutions in sufficiently regular spaces. 

 The three dimensional case has recently received significant attention. In particular, in Coutand-Lindblad-Shkoller \cite{Coutand-Lindblad-Shkoller} are formally derived in the case $\kappa=1$. In Coutand-Shkoller \cite{Coutand-Shkoller2}, existence is
 also proved via a parabolic regularization procedure, but once again, in a functional setting that is not complete; moreover, their difference bound requires more regularity on the solutions than the existence result. Independently, Jang and Masmoudi \cite{Jang-Masmoudi2} also use a parabolic regularization to prove the existence and uniqueness for solutions for arbitrary $\kappa>0$, but with a different proof of the energy estimates. However, they carry out their proofs only on the torus in the Lagrangian setting, only briefly outlining the general case.

 All of these results in the vacuum case were proved in the Lagrangian setting, and as we have already mentioned in this introduction, the first fully Eulerian results were proved by Ifrim and Tataru  \cite{Ifrim-Tataru}, who, together with Disconzi, also proved the counterparts of these results for the relativistic equations \cite{Disconzi-Ifrim-Tataru}. Later on, together with Pineau and Taylor they also obtained counterparts for these results in the incompressible fluid case \cite{Ifrim-Pineau-Tataru-Taylor}.

 Strichartz estimates for the wave and Schr\" odinger equation have a long history, and they were first derived in the case $q=r$ by Strichartz in \cite{Strichartz} for the wave and classical Schr\" odinger equations. These results were later extended to mixed $L_t^qL_x^r$ by Ginibre and Velo \cite{Ginibre-Velo} for the Schr\" odinger equation, and then independently by Ginibre-Velo \cite{Ginibre-Velo2} and Lindblad-Sogge \cite{Lindblad-Sogge}, following earlier work by Kapitanskii \cite{Kapitanski}. The remaining endpoints were settled by Keel-Tao \cite{Keel-Tao}. 
 
 In the context of wave equations with smooth variable coefficients, the first results were obtained independently by Mockenhaupt-Seeger-Sogge \cite{Mockenhaupt-Seeger-Sogge} and Kapitanskii \cite{Kapitanskii2}.For rough coefficients, the first results were obtained in dimensions $d=2$ and $d=3$ by Smith \cite{Smith} for metrics of class $C^2$ by using wave packet techniques. Smith and Sogge also showed that when the metric has H\"older regularity $C^\alpha$, with $\alpha<2$, the result might fail.
 
 The first improvements were obtained independently by Bahouri-Chemin \cite{Bahouri-Chemin3} and Tataru \cite{Tataru-I}, in which they obtained Strichartz estimates with a $\frac{1}{4}$-derivative loss.  These results were improved by Tataru in all dimensions in \cite {Tataru-II} and \cite{Tataru-III}, where he obtained Strichartz estimates with a $\frac{1}{6}$-derivative loss. In the meantime, Bahouri and Chemin also improved their own results in \cite{Bahouri-Chemin2} to a loss of derivatives slightly better than $\frac{1}{5}$. However, Smith and Tataru subsequently proved in \cite{Smith-Tataru-counter} that for general metrics of class $C^1$, a loss of $\frac{1}{6}$ is optimal. In the case of quasilinear wave equations, lossless Strichartz were obtained by by Smith and Tataru \cite{Smith-Tataru} in dimensions $d=2$ and $d=3$.

For manifolds with smooth and strictly geodesically concave boundary, Smith and Sogge \cite{Smith-Sogge} used the Melrose and Taylor parametrix to prove the Strichartz estimates in the non-endpoint cases for the corresponding wave equation. Here, the concavity condition is essential, as in its absence, there could exist multiply reflecting geodesics, as well as their limits consisting of gliding rays, which would prevent the existence of the aforementioned parametrix.

Koch, Smith, and Tataru \cite{Koch-Smith-Tataru} proved "log-loss" estimates for the spectral clusters on compact manifolds without boundary. Burq, Lebeau, and Planchon \cite{Burq-Lebeau-Planchon} obtained Strichartz type inequalities on manifolds with boundaries by using the $L^r(\Omega)$ estimates that had been proved by Smith and Sogge \cite{Smith-Sogge2} for a class of spectral operators. However, the range of indices $(q,r)$ is restricted by the admissible range for $r$ in the squarefunction estimate fort he wave equation, which controls $u$ in the space $\displaystyle L^r(\Omega,L^2((-T,T))$. For example, when $d=3$, this restricts $(q,r)$ to $\displaystyle q,r\geq 5$. Blair, Smith and Sogge \cite{Blair-Smith-Sogge} later extended the result from \cite{Burq-Lebeau-Planchon}, proving that if $\Omega$ is a compact manifold with boundary, and $(q,r,\beta)$ a triple with
\begin{align*}
    \frac{1}{q}+\frac{d}{r}=\frac{d}{2}-\beta,
\end{align*}
subject to
\begin{align*}
    \frac{3}{r}+\frac{d-1}{r}\leq\frac{d-1}{2},\text{ when }d\leq 4,\,\text{ and }\frac{1}{q}+\frac{1}{r}\leq\frac{1}{2},\text{ when }d\leq 4,
\end{align*}
then Strichartz estimates hold for solutions to the wave equations with homogeneous Dirichlet or Neumann boundary conditions, with the implicit constant depending solely on $\Omega$ and $T$.

In the context of strictly convex manifolds with boundary, counterexamples to Strichartz estimates and the extent to which they still hold were studied by Ivanovici in \cite{Ivanovici} in the case of the upper half-space. In the Friedlander model case, dispersive estimates for the general wave equations were investigated by Ivanovici, Lebeau, and Planchon in \cite{Ivanovici-Lebeau-Planchon}; for large time dispersive estimates for the Klein-Gordon and wave equations, see \cite{Ivanovici2}. For the general case, see the works of Ivanovici, Lebeau, and Planchon \cite{Ivanovici-Lebeau-Planchon2,Ivanovici-Lebeau-Planchon3}. For counterexamples to Strichartz estimates in two dimensional convex domains, see another work of the same authors \cite{Ivanovici-Lebeau-Planchon4}. 

\subsection{An outline of the paper}
Here, we briefly describe the main structure of this article, along with some of the main ideas in each section. 
 \subsubsection{Velocity potential formulation for the equation} In this short section we take advantage of the irrotationality of $v$ and of its linearized counterpart $w$ in order to derive simpler, stream function formulations for both the fully nonlinear equations and the linearized one. In particular, it turns out that both stream functions solve wave equations.
 \subsubsection{The Hamiltonian of the problem and its bicharacteristics} The goal of this section is to determine the bicharacteristics of the Hamiltonian flow associated to our wave operator $\partial_t^2-\kappa x_d\Delta-\partial_d$, which will also provide us with the explicit form of the geodesics associated to the acoustic metric. This will allow us to see that there exist multiply reflecting geodesiscs next to the boundary, which shall confirm our heuristics. The explicit form of the geodesics will also enable us to motivate the choice of scales in the construction of our gallery, which are the objects of interest in this paper.
 
 \subsubsection{Whispering gallery modes} In this section we shall study the eigenvalue problem associated to our operator at a fixed tangential frequency $\xi'$. We first study the corresponding eigenfunctions heuristically by performing a WKB analysis. It turns out that the resulting ordinary differential equation has a turning point, where the behavior of the resulting solutions changes drastically. The resulting asymptotics also allow us to determine some relevant limit conditions at the boundary that will allow us to work in our energy space $\mathcal{H}$. We also describe them explicitly in terms of Laguerre polynomials and of hypergeometric functions. Our analysis here will allow us to define our gallery waves.
 \subsubsection{Proof of Theorem \ref{Estimates for gallery modes}} In this section, we reduce the proof of Theorem \ref{Estimates for gallery modes} to proving Strichartz estimates for an initial value problem in $d-1$ dimensions. We prove the aforementioned estimates by applying a stationary phase argument in order to derive dispersive estimates, from which the Strichartz estimates immediately follow using the $TT^{\ast}$ lemma.
 \subsubsection{Construction of our counterexamples to the Strichartz estimates}
 In this section, we construct the functions that are going to serve as our counterexamples to the classical Strichartz estimates as superpositions of wave packets, which are in turn "averages" of gallery modes. Their behavior is going to roughly mimic the behavior of multiply reflecting geodesics. 
 \subsubsection{Proof of Theorems \ref{Counterexample Strichartz} and \ref{Counterexample Strichartz 2}} In this section, we use the solutions constructed in Section in order to prove Theorems \ref{Counterexample Strichartz} and \ref{Counterexample Strichartz 2}. We deduce that we have a loss in the Strichartz estimates for our equation.

 \subsection{Acknowledgements} The author would like to thank Daniel Tataru and Mihaela Ifrim for very many helpful discussions.

 Part of this work has been done while the author was participating in the "Mathematical Problems in Fluid Dynamics Reunion" at the Simons Laufer Mathematical Sciences Institute (July 2023) and the "Nonlinear Waves and Relativity" thematic program at the Erwin Schr\"odinger International Institute for Mathematics and Physics (May 2024).

 The author has also been supported by the NSF
grant DMS-2054975, the Simons Foundation, as well as the NSF grant DMS-1928930 while in residence at the Simons Laufer Mathematical Sciences Institute (formerly MSRI) in Berkeley, California.
\section{Velocity potential formulation for the equations}\label{s:Stream function formulation}

In this section, we shall take advantage of the irrotationality conditions in equations \eqref{irrotational Euler} and \eqref{linearized irrotational Euler}, and derive their stream function formulations.

We recall our equations:
\begin{equation*}
\begin{cases}\begin{aligned}
  &D_tr+\kappa r(\nabla\cdot v)=0\\
  &D_tv+\nabla r=0\\
  &\curl v=0,
  \end{aligned}
  \end{cases}
\end{equation*}
as well as the linearized counterparts:
\begin{equation*}
\begin{cases}\begin{aligned}
  &D_ts+w\cdot\nabla r+\kappa s(\nabla\cdot v)+\kappa r(\nabla\cdot w)=0\\
  &D_tw+(w\cdot\nabla)v+\nabla s=0\\
  &\curl w=0.
  \end{aligned}
  \end{cases}
\end{equation*}

We claim that they admit the following reformulation:
\begin{proposition}
  There exists a potential function $\phi$ such that 
  \begin{align*}
      D_t\phi&=\frac{|v|^2}{2}-r\\
      D_t^2\phi&+\nabla r\cdot\nabla\phi-\kappa r\Delta\phi=0.
  \end{align*}
  Similarly, for the linearized equation there exists a velocity potential $\psi$ such that
  \begin{align*}
      D_t\psi&=-s\\
      D_t^2\psi&-\nabla r\cdot\nabla\psi-\kappa r\Delta\psi=\kappa s(\nabla\cdot v),
  \end{align*}
\end{proposition}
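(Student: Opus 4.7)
The plan is to derive both statements by exploiting irrotationality to rewrite the respective momentum equations as pure gradients, then integrate. First, since $\curl v=0$ on the (simply connected slices of the) domain $\Omega_t$, there exists a scalar $\phi(t,x)$, unique up to an additive function of $t$, with $v=\nabla\phi$. The irrotationality identity $(v\cdot\nabla)v=\nabla(|v|^2/2)$ allows us to write the momentum equation $D_tv+\nabla r=0$ as
\begin{equation*}
\nabla\!\left(\partial_t\phi+\tfrac{|v|^2}{2}+r\right)=0.
\end{equation*}
So $\partial_t\phi+|v|^2/2+r$ depends only on $t$; absorb that function into $\phi$ (which does not change $v=\nabla\phi$) to set it to $0$. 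Rewriting $\partial_t=D_t-v\cdot\nabla=D_t-|v|^2/|v|^{2}\cdot$… concretely, $\partial_t\phi=D_t\phi-v\cdot\nabla\phi=D_t\phi-|v|^2$, which gives $D_t\phi=|v|^2/2-r$, as claimed.

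Next I would differentiate this identity in the material direction: $D_t^2\phi=D_t(|v|^2/2)-D_tr=v\cdot D_tv-D_tr$. The momentum equation gives $v\cdot D_tv=-\nabla r\cdot\nabla\phi$, while the continuity equation gives $D_tr=-\kappa r\,\nabla\!\cdot v=-\kappa r\Delta\phi$. Adding these,
\begin{equation*}
D_t^2\phi+\nabla r\cdot\nabla\phi-\kappa r\Delta\phi=0,
\end{equation*}
which is the acoustic wave equation for $\phi$.

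For the linearized problem I would proceed in a parallel manner, but the new wrinkle is that the advection term $(w\cdot\nabla)v$ is not automatically a gradient. Here I would use the vector identity, valid whenever both fields are curl-free,
\begin{equation*}
(w\cdot\nabla)v+(v\cdot\nabla)w=\nabla(v\cdot w),
\end{equation*}
which follows from $\partial_iw_k=\partial_kw_i$ and $\partial_iv_k=\partial_kv_i$ by a direct index computation. Writing $w=\nabla\psi$ for some potential (possible since $\curl w=0$) and inserting this identity into $\partial_tw+(v\cdot\nabla)w+(w\cdot\nabla)v+\nabla s=0$ yields
\begin{equation*}
\nabla\!\left(\partial_t\psi+v\cdot w+s\right)=0.
\end{equation*}
Choosing the additive $t$-dependent freedom in $\psi$ to kill this function of $t$ and then rewriting $\partial_t\psi+v\cdot\nabla\psi=D_t\psi$ gives $D_t\psi=-s$.

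Finally I would apply $D_t$ to $D_t\psi=-s$ and substitute the linearized continuity equation $D_ts=-w\cdot\nabla r-\kappa s(\nabla\cdot v)-\kappa r\Delta\psi$ to obtain
\begin{equation*}
D_t^2\psi-\nabla r\cdot\nabla\psi-\kappa r\Delta\psi=\kappa s(\nabla\cdot v),
\end{equation*}
exactly the stated wave equation. The computations are straightforward once the irrotationality identities are in hand; the only conceptual point that needs care is the choice of the gauge constant (the additive function of $t$ in both $\phi$ and $\psi$) so that the Bernoulli-type relations hold without a residual time-dependent term. The key nontrivial input is the symmetric vector identity used in the linearized case, which is where the hypothesis $\curl w=0$ is genuinely essential.
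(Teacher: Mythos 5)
Your proof is correct and follows essentially the same route as the paper: introduce the potentials, integrate the (linearized) momentum equation up to a function of $t$ that is absorbed into the potential, then apply $D_t$ and substitute the continuity equations. The only variation is in obtaining $D_t\psi=-s$: the paper linearizes the Bernoulli relation $D_t\phi=\tfrac{|v|^2}{2}-r$ directly (the linearization of $D_t$ producing the $w\cdot\nabla\phi=w\cdot v$ term), whereas you integrate the linearized momentum equation using the curl-free identity $(w\cdot\nabla)v+(v\cdot\nabla)w=\nabla(v\cdot w)$ — both are equally valid and lead to the same gauge-fixing step.
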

\begin{proof}
As $v$ is irrotational, there exists a potential $\phi$ such that $v=\nabla\phi$. In this case, the equation for $v$ successively becomes: 
\begin{align*}
    D_tv&=-\nabla r\iff D_t(\nabla\phi)=-\nabla r\iff \nabla(D_t\phi)-\frac{1}{2}\nabla|\nabla\phi|^2=-\nabla r\\
    \nabla(D_t\phi)&=\nabla\left(\frac{1}{2}|\nabla\phi|^2-r\right)\iff
    D_t\phi=\frac{1}{2}|\nabla\phi|^2-r+k(t)
\end{align*}
By making the substitution
\begin{align*}
    \phi\rightarrow \phi-\int_0^tk(\tau)\,d\tau,
\end{align*}
we may assume $k(t)=0$ for all $t$. This gives
\begin{align*}
    D_t\phi&=\frac{|v|^2}{2}-r
\end{align*}
Upon applying $D_t$ to both sides and using the equations for $v$ and $r$, it follows that
\begin{align*}
    D_t^2\phi=-\nabla\phi\cdot\nabla r+\kappa r\Delta\phi
\end{align*}
We linearize the potential $\phi$ in the relation $v=\nabla\phi$, which implies that $w=\nabla\psi$. By linearizing the equality $\displaystyle D_t\phi=\frac{|v|^2}{2}-r$, we successively get that
\begin{align*}
    D_t\psi&+w\cdot v=v\cdot \nabla\psi-s\iff\\
    D_t\psi&=-s
\end{align*}
By applying $D_t$ to both sides and using the equation for $s$, we infer that
\begin{align*}
    D_t^2\psi-\nabla r\cdot\nabla \psi-\kappa r\Delta \psi=\kappa s(\nabla\cdot v)
\end{align*}
This finishes the proof.
\end{proof}

As we have already briefly discussed in the introduction and we shall see in more detail in Section \ref{s:Whispering gallery modes}, if we fix a time frequency, in the region $x_d\lesssim \tau^{-2}$, where the uncertainty principle does not yet have a dominating effect, the solution to the equation will asymptotically be equal to a linear combination of two fundamental solutions: $\tilde{\psi}=x_d^{\frac{1}{\kappa}-1}$ and $\tilde{\psi}=1$. When assuming $\kappa<1$, the finiteness of the energy
\begin{align*}
    \int x_d^{\frac{1}{\kappa}-1}((\partial_t\psi)^2+\kappa x_d|\nabla\psi|^2)\,dx
\end{align*}
implies that only $\tilde{\psi}=1$ is viable.  In particular, this would correspond to a solution which is roughly constant in the normal variable in the region $x_d\lesssim \tau^{-2}$. This also shows that we do not impose boundary conditions for the linearized acoustic equation.
\section{The Hamiltonian of the problem and its bicharacteristics}\label{s:The Hamiltonian of the problem and its bicharacteristics}
In this section we shall determine the bicharacteristics of the Hamiltonian flow associated to our wave operator $\partial_t^2-\kappa x_d\Delta-\partial_d$, which will also provide us with the explicit form of the geodesics associated to the acoustic metric. This will allow us to see that there exist geodesics with multiple periodic reflections next to the boundary, which shall confirm our heuristics. This will also motivate the choice of the scales in our construction of gallery waves, which are going to be our objects of interest.

The Hamiltonian associated to the homogeneous term of second degree of our wave operator has the form
\begin{align*}
    H&=\kappa x_d\xi_d^2+\kappa x_d|\xi'|^2-\tau^2.
\end{align*}
In what follows, we shall determine its bicharacteristics, which will also give us the geodesics associated to the corresponding acoustic metric.

We consider a bicharacteristic of the form $(t,x_d,x',\tau,\xi_d,\xi')(s)$, where $s$ is the parameter, with initial data $\displaystyle (t_0,x_{d0},x'_0,\tau_0,\xi_{d0},\xi'_0)$. The equations read:
\begin{align*}
    \frac{d}{ds}(t,x_d,x',\tau,\xi_d,\xi')&=(-2\tau,2\kappa x_d\xi_d,2\kappa x_d \xi',0,-\kappa(\xi_d^2+|\xi'|^2),0).
\end{align*}
It immediately follows that
\begin{align*}
    \tau(s)&=\tau_0\\
    t(s)&=-2s\tau_0+t_0\\
    \xi'(s)&=\xi'_0.
\end{align*}
When $\xi'_0\neq 0$, we also obtain
\begin{align*}
    \frac{1}{|\xi'|}\arctan\left(\frac{\xi_d(s)}{|\xi'|}\right)- \frac{1}{|\xi'|}\arctan\left(\frac{\xi_{d0}}{|\xi'|}\right)&=-\kappa s\\
    \xi_d(s)&=|\xi'|\tan\left(\arctan\left(\frac{\xi_{d0}}{|\xi'|}\right)-\kappa|\xi'|s\right)
\end{align*}
This immediately implies that
\begin{align*}
    x_d(s)&=x_{d0}e^{2\kappa\int_0^s\xi_d(\sigma)\,d\sigma}\\
    x_d(s)&=x_{d0}e^{\int_0^s 2\kappa|\xi'|\tan\left(\arctan\left(\frac{\xi_{d0}}{|\xi'|}\right)-\kappa|\xi'|\sigma\right)\,d\sigma}\\
    &=x_{d0}e^{2\ln\left|\cos\left(\arctan\left(\frac{\xi_{d0}}{|\xi'|}\right)-\kappa|\xi'|s\right)\right|-2\ln\left|\cos\left(\arctan\left(\frac{\xi_{d0}}{|\xi'|}\right)\right)\right|}\\
    &=x_{d0}\frac{\cos^2\left(\arctan\left(\frac{\xi_{d0}}{|\xi'|}\right)-\kappa|\xi'|s\right)}{\cos^2\left(\arctan\left(\frac{\xi_{d0}}{|\xi'|}\right)\right)}\\
    &=x_{d0}\left(\frac{|\xi|^2}{2|\xi'|^2}+\cos\left(2\kappa|\xi'|s\right)\frac{|\xi'|^2-\xi_{d0}^2}{2|\xi'|^2}+\sin\left(2\kappa|\xi'|s\right)\frac{\xi_{d0}}{|\xi'|}\right).
\end{align*}
We also have
\begin{align*}
\dot{x'}&=2\kappa\xi'_0x_d\\
&=2\kappa \xi'_0x_{d0}\left(\frac{|\xi|^2}{2|\xi'|^2}+\cos\left(2\kappa|\xi'|s\right)\frac{|\xi'|^2-\xi_{d0}^2}{2|\xi'|^2}+\sin\left(2\kappa|\xi'|s\right)\frac{\xi_d}{|\xi'|}\right),
\end{align*}
hence
\begin{align*}
    x'(s)=x'_0+2\kappa \xi'_0x_{d0}\left(\frac{s|\xi|^2}{2|\xi'|^2}+\frac{\sin\left(2\kappa|\xi'|s\right)}{2\kappa|\xi'|}\frac{|\xi'|^2-\xi_{d0}^2}{2|\xi'|^2}+\frac{1-\cos\left(2\kappa|\xi'|s\right)}{2\kappa|\xi'|}\frac{\xi_{d0}}{|\xi'|}\right)
\end{align*}

On the other hand, when $\xi'_0=0$, we have
\begin{align*}
    \frac{1}{\xi_{d0}}-\frac{1}{\xi_d(s)}&=-\kappa s\\
    \xi_d(s)&=\frac{\xi_{d0}}{\kappa s\xi_{d0}+1}.
\end{align*}
We immediately obtain
\begin{align*}
    x_d(s)&=x_{d0}e^{2\kappa\int_0^s\xi_d(\sigma)\,d\sigma}\\
    x_d(s)&=x_{d0}e^{\int_0^s \frac{2\kappa \xi_{d0}}{\kappa \sigma\xi_{d0}+1}\,d\sigma}\\
    &=x_{d0}e^{2\ln\left|\kappa s\xi_{d0}+1\right|}\\
    &=x_{d0}\left(\kappa s\xi_{d0}+1\right)^2.
\end{align*}
The equation
\begin{align*}
    \dot{x'}&=0
\end{align*} also implies that 
\begin{align*}
    x'(s)=x'_0.
\end{align*}

This fully describes the bicharacteristics of the Hamiltonian flow and the geodesics associated to our acoustic metric as well. Moreover, when $\xi'\neq 0$, we can see that even though our geodesics/bicharacteristics can't be smoothly extended to the boundary, where they develop a cusp singularity, there is one meaningful way in which they can be continued (both forward and backward).

We analyze the case $\xi'\neq 0$, for a bicharacteristic starting at $\displaystyle (t_0,x_{d0},x'_0,\tau_0,\xi_{d0},\xi'_0)$. The times of collision with the boundary are of the form
 \begin{align*}
s_k&=\frac{\arctan\left(\frac{\xi_{d0}}{|\xi'|}\right)-\frac{(2k+1)\pi}{2}}{\kappa|\xi'|} 
 \end{align*}
 For $k=-1,1$ the points of collision have tangential coordinates
 \begin{align*}
     k=-1&:x'_0+2\kappa x_{d0}\xi'\left(\frac{|\xi'|^2+\xi_{d0}^2}{2|\xi'|^2}\frac{\arctan\left(\frac{\xi_{d0}}{|\xi'|}\right)-\frac{\pi}{2}}{\kappa|\xi'|}+\frac{\xi_{d0}}{2\kappa|\xi'|^2}\right)\\
     k=1&:x'_0+2\kappa x_{d0}\xi'\left(\frac{|\xi'|^2+\xi_{d0}^2}{2|\xi'|^2}\frac{\arctan\left(\frac{\xi_{d0}}{|\xi'|}\right)+\frac{\pi}{2}}{\kappa|\xi'|}+\frac{\xi_{d0}}{2\kappa|\xi'|^2}\right)
 \end{align*}
 The difference between the times of collision is given by
 $\displaystyle
    \Delta s=\frac{\pi}{\kappa|\xi'|}$, hence $\displaystyle \Delta t=\frac{2\pi\tau_0}{\kappa|\xi'|}$.
    
 The difference in the tangential coordinates is given by $\displaystyle \Delta x'=\pi x_{d0}\frac{\xi'}{|\xi'|}\frac{|\xi'|^2+\xi_{d0}^2}{|\xi'|^2}$.

 Let $t_0=0$, $\tau_0=2^j$, $|\xi'|=2^{2j}$, so that $\displaystyle \Delta t\approx 2^{-j},\Delta x'\approx 1$. This shows that when $t\in[0,1]$, the geodesic will hit the boundary $2^j$ times. 

We shall approximate the proportion of time that the geodesic spends in a region of the form $x_d\approx cx_{d0}$, with $\displaystyle c\ll 1$. We recall the expression for $x_d(s)$, and we get that
\begin{align*}
    x_d(s)&=x_{d0}\left(\frac{1}{2}+\cos\left(2\kappa|\xi'|s\right)\frac{1}{2}\right)
\end{align*}
We analyze the behavior close to $s_1$, where $2\kappa|\xi'|s_1=\pi$. By taking the Taylor series expansion around this point, we get that

\begin{align*}
    x_d(s)&\approx x_{d0}\left(\frac{1}{2}+\frac{1}{2}\left(-1+\frac{4\kappa^2|\xi'|^2(s-s_1)^2}{2}\right)\right)\approx x_{d0}\kappa^2|\xi'|^2(s-s_1)^2
\end{align*}
 Thus, the geodesic stays in $x_d\approx cx_{d0}$, with $c\ll 1$ for $\displaystyle\Delta s\approx \frac{c^{\frac{1}{2}}}{|\xi'|x_{d0}^{\frac{1}{2}}}\approx 2^{-j}c^{\frac{1}{2}}$, hence for $\displaystyle \Delta t\approx c^{\frac{1}{2}}$. 
 
 Therefore, a solution of \eqref{Acoustic wave} which propagates along such a geodesic spends a positive proportion of time in here, bounded from below uniformly in $j$. In this case, due to the small time variations, the velocity component from the $B$ control parameter would have to be estimated pointwise in time by Sobolev embeddings, the $L_t^1$ component playing thus no role, with the Strichartz estimates not bring any improvement. This suggests that Ifrim and Tataru's result, Theorem \ref{Initial Theorem} from \cite{Ifrim-Tataru}, might be optimal in the frequency range where $\tau^2\lesssim  |\xi'|$.
  
\section{Whispering gallery modes}\label{s:Whispering gallery modes}
We consider the operator $\tilde{L}=-\kappa x_d\partial_d^2-\kappa\Delta_{x'}-\partial_d$. By taking the Fourier transform in the tangential variable $x'$, we obtain $\displaystyle L_{\xi'}=-\kappa x_d\partial_d^2+\kappa|\xi'|^2-\partial_d$. We have
\begin{align*}
\int L_{\xi'}f\cdot f\,dx_d&=-\int x_d^{\frac{1}{\kappa}-1}(-\kappa x_d|\xi'|^2f+\kappa x_d\partial_d^2f+\partial_df)f\,dx_d\\
&=\kappa\int x_d^{\frac{1}{\kappa}}(\partial_df)^2\,dx_d+\int x_d^{\frac{1}{\kappa}-1}f\partial_df\,dx_d\\
    &-\int x_d^{\frac{1}{\kappa}-1}f\partial_df\,dx_d+\kappa\int x_d^{\frac{1}{\kappa}}|\xi'|^2f^2\,dx_d\\
    &=\kappa\int x_d^{\frac{1}{\kappa}}(\partial_df)^2\,dx_d+\kappa\int x_d^{\frac{1}{\kappa}}|\xi'|^2f^2\,dx_d
\end{align*}
The associated quadratic form will have the form
\begin{equation}\label{Quadratic form}
\begin{aligned}
    Q(f)&=\int_0^\infty\kappa x_d^{\frac{1}{\kappa}}((\partial_df)^2+|\xi'|^2f^2)\,dx_d
\end{aligned}
\end{equation}

In what follows, we consider the eigenvalue problem for our operator $\tilde{L}$, which has the form
\begin{align*}
    (\kappa x_d\Delta_{x'}+\kappa x_d\partial_d^2+\partial_d)f&=-\lambda f
\end{align*}

Morally, studying the aforementioned eigenvalue problem corresponds to fixing the time frequency in the wave equation, which is justifiable by the fact that time frequency localization commutes with our equation. Our analysis here will allow us define gallery waves.

For the eigenvalue problem, tangential frequency localization also commutes with $\tilde{L}$, and taking the tangential Fourier transform will give

\begin{align*}
    (-\kappa x_d|\xi'|^2+\kappa x_d\partial_d^2+\partial_d)f&=-\lambda f
\end{align*}

We also note that $\lambda>0$, and we write $\lambda=\mu|\xi'|$, where $\mu>0$, for which our problem now becomes

\begin{align*}
    (\mu|\xi'|-\kappa x_d|\xi'|^2+\kappa x_d\partial_d^2+\partial_d)f&=0.
\end{align*}
\subsection{A WKB analysis for the eigenvalue problem}
We distinguish several cases:
\subsubsection{The case $\kappa x_d|\xi'|^2\gtrsim \mu|\xi'|$}
Here, we expect the elliptic effect of $\displaystyle x_d\Delta_{x'}$ to dominate. The approximate equation will have the form
\begin{align*}
    (-\kappa x_d|\xi'|^2+\kappa x_d\partial_d^2+\partial_d)f&=0.
\end{align*}
We consider an ansatz of the form
\begin{align*}
    \tilde{f}&=e^{|\xi'|(S_0(x_d)+|\xi'|^{-1}S_1(x_d))},
    \end{align*}
    where $|\xi'|$ is our fixed semiclassical parameter.  

    Inserting this ansatz into the equation, and equating the coefficients of $|\xi'|^2$ and $|\xi'|$ give the solutions
\begin{align*}
    \tilde{f}&\approx Ke^{\pm|\xi'|x_d}x_d^{-\frac{1}{2\kappa}}
\end{align*}

However, we seek solutions with the property that $\displaystyle x_d^{\frac{1}{2\kappa}}\partial_d\tilde{f}$ is square integrable, which leaves with the option

\begin{align*}
    \tilde{f}&\approx Ke^{-|\xi'|x_d}x_d^{-\frac{1}{2\kappa}}.
\end{align*}

\subsubsection{The case $\kappa x_d|\xi'|^2\ll \mu|\xi'|$, $x_d\gtrsim 1$.} In this case, the uncertainty principle doesn't have a significant effect yet, and we expect the $\mu|\xi'|$ term to dominate, which will impose an oscillatory behavior.

The approximate equation is
\begin{align*}
    (\mu|\xi'|+\kappa x_d\partial_d^2+\partial_d)f&=0.
\end{align*}

We consider an ansatz of the form
\begin{align*}
    \tilde{f}&=e^{|\xi'|^{\frac{1}{2}}S_0(x_d)+S_1(x_d)}
\end{align*}
Inserting this ansatz into the equation, and equating the coefficients of $|\xi'|$ and $|\xi'|^{\frac{1}{2}}$ give the solutions

\begin{align*}
    \tilde{f}\approx Kx_d^{\frac{1}{4}-\frac{1}{2\kappa}}e^{\pm|\xi'|^{\frac{1}{2}}\frac{2i\sqrt{\mu}}{\sqrt{\kappa}}x_d^{\frac{1}{2}}},
\end{align*}
which is square integrable.

\subsubsection{The case $\kappa x_d|\xi'|^2\ll \mu|\xi'|$, $x_d\ll 1$, $\kappa x_d\xi_d^2\ll |\xi'|\mu$.}
We approximately have
\begin{align*}
\partial_d\tilde{f}+|\xi'|\mu\tilde{f}&=0\\
\tilde{f}&\approx Ke^{-x_d|\xi'|\mu}
\end{align*}
\subsubsection{The case $\kappa x_d|\xi'|^2\ll \mu|\xi'|$, $x_d\ll 1$, $\kappa x_d\xi_d^2\gtrsim |\xi'|\mu$.}
We approximately have
\begin{align*}
\kappa x_d\partial_d^2\tilde{f}+\partial_d\tilde{f}&=0,
\end{align*}
hence
\begin{align*}
    \tilde{f}&=K_1+K_2x_d^{1-\frac{1}{\kappa}}
\end{align*}
when $\kappa\neq 1$.

When $\kappa<1$, the second solution is not in our desired space, so we would be left with the constant.

When $\kappa=1$, we would instead have
\begin{align*}
    x_d\partial_d^2\tilde{f}+\partial_d\tilde{f}&=0\\
    \tilde{f}&=K_1+K_2\ln(x_d)
\end{align*}
Here, $\ln(x_d)$ is not in our desired space, so we will be left with the constant.

When $\kappa>1$, we shall make a choice that is going to be explicitly described in Section \ref{s:Whispering gallery modes}.

For $\xi'\neq 0$, the quadratic form corresponding to $L_{\xi'}$ is non-degenerate and positive definite. In light of our previous discussion, we shall take its domain to be

$D(Q)=\begin{cases}
    \{f|x^{\frac{1}{2\kappa}}\partial_xf,x^{\frac{1}{2\kappa}}f\in L^2(\mathbb{R}_{+})\}\cap\{f|x_d^{\frac{1}{2\kappa}-1}f\in L^2(\mathbb{R}_{+})\}\text{, when } \kappa>1\\
    \{f|x^{\frac{1}{2\kappa}}\partial_xf,x^{\frac{1}{2\kappa}}f\in L^2(\mathbb{R}_{+}),\lim_{\substack{x\rightarrow 0_+}}x^{\frac{1}{\kappa}-1}f(x)=0\}\cap\{f|x_d^{\frac{1}{2\kappa}-1}f\in L^2(\mathbb{R}_{+})\}\text{, when } \kappa<1\\
    \{f|x^{\frac{1}{2\kappa}}\partial_xf,x^{\frac{1}{2\kappa}}f\in L^2(\mathbb{R}_{+}),\lim_{\substack{x\rightarrow 0_+}}\frac{f(x)}{\ln(x)}=0\}\cap\{f|x_d^{\frac{1}{2\kappa}-1}f\in L^2(\mathbb{R}_{+})\}\text{, when } \kappa=1.
\end{cases}$

 $Q(f)$ is symmetrical, positive definite (bounded from below), so its pointwise spectrum will consist solely of positive real numbers.

By writing $\lambda(\xi')=\mu|\xi'|$, with $\mu_k>0$, our eigenvalue problem takes the form
\begin{align*}
    (\mu|\xi'|-\kappa x_d|\xi'|^2+\kappa x_d\partial_d^2+\partial_d)f&=0,
\end{align*}
where $\mu$ is independent of $|\xi'|$ (this can be immediately seen by rescaling).

\subsection{An explicit form for the solution to the eigenvalue problem}

We rewrite the equation solved by the eigenfunction as follows:
\begin{align*}
   \left(x_d\partial_d^2+\frac{1}{\kappa}\partial_d+\left(\frac{\lambda}{\kappa}-|\xi'|^2x_d\right)\right)f&=0. 
\end{align*}

This equation will have the general solution given by the formula
\begin{align*}
    f&=c_1e^{-|\xi'|x_d}U\left(\frac{1}{2}\left(\frac{1}{\kappa}-\frac{\lambda}{\kappa|\xi'|}\right),\frac{1}{\kappa},2|\xi'|x_d\right)+c_2e^{-|\xi'|x_d}L_{\frac{1}{2}\left(\frac{\lambda}{\kappa|\xi'|}-\frac{1}{\kappa}\right)}^{\frac{1}{\kappa}-1}(2|\xi'|x_d)
\end{align*}
Here, $U$ is a hypergeometric function , which, when $b$ is not an integer, is given by
\begin{align*}
   U(a,b,z)&=\frac{\Gamma(1-b)}{\Gamma(a-b+1)}\sum_{\substack{k=0}}^\infty\frac{(a)_kz^k}{(b)_kk!}+\frac{\Gamma(b-1)}{\Gamma(a)}z^{1-b}\sum_{\substack{k=0}}^\infty\frac{(a-b+1)_kz^k}{(2-b)_kk!}\,,b\notin\mathbb{Z},
   \end{align*}
   while in the case $b\in\mathbb{Z}_+$,
   \begin{align*}
   U(a,b,z)&=\frac{(-1)^b}{\Gamma(a-b+1)}\left(\frac{\log(z)}{(b-1)!}\sum_{\substack{k=0}}^\infty\frac{(a)_kz^k}{(b)_kk!}+\sum_{\substack{k=0}}^\infty\frac{(a)_k(\psi(a+k)-\psi(k+1)-\psi(k+b))z^k}{(k+b-1)!k!}\right)\\
   &-\frac{(-1)^b}{\Gamma(a-b+1)}\sum_{\substack{k=1}}^{n-1}\frac{(k-1)!z^{-k}}{(1-a)_k(b-k-1)!}\,,b\in\mathbb{Z}_{+}.
\end{align*}
$\displaystyle L^{\lambda}_{\nu}(z)$ is a generalized Laguerre polynomial given by
\begin{align*}
    L^{\lambda}_{\nu}(z)&=\frac{\Gamma(\lambda+\nu+1)}{\Gamma(\nu+1)}\sum_{\substack{k=0}}^{\infty}\frac{(-\nu)_kz^k}{\Gamma(k+\lambda+1)k!},
\end{align*}
and $\displaystyle (a)_k=\Pi_{\substack{j=1}}^k(a+j-1)$ is the Pochammer symbol.

The discussion from the previous section allows us to choose
\begin{align*}
   f(x_d)&= e^{-|\xi'|x_d}L_{\frac{1}{2}\left(\frac{\lambda}{\kappa|\xi'|}-\frac{1}{\kappa}\right)}^{\frac{1}{\kappa}-1}(2|\xi'|x_d).
\end{align*}
When $\kappa\leq 1$, an equivalent way to impose this choice is to require that

$
\begin{cases}
  \lim_{\substack{x\rightarrow 0_+}}\frac{f(x)}{\ln(x)}=0,\kappa=1
  \\
  \lim_{\substack{x\rightarrow 0_+}}x^{\frac{1}{\kappa}-1}f(x)=0,\kappa<1.
\end{cases}$

In all cases, we also know that $f$ is bounded, and that its zeroes are isolated (it can be extended to a holomorphic function in the whole complex plane). Moreover, it will also follow that $f$ is continuous with respect to the parameters $|\xi'|$, $\lambda$, and $\kappa$.
\begin{definition}
Let $\mu>0$, $\xi'\neq 0$ be a non-zero tangential frequency, the corresponding eigenvalue $\displaystyle \lambda=\mu|\xi'|$, and let $\displaystyle B(\mu,x):=f\left(\frac{x}{|\xi'|}\right)$, where $f$ is as above. We define the set of whispering gallery modes $\displaystyle E_{\mu}(\Omega)$ by taking the closure in $\displaystyle \{f|x^{\frac{1}{2\kappa}-1}f\in L^2(\mathbb{R}_+\times\mathbb{R}^{d-1})\}$ of
\begin{align*}
  \left\{u(x_d,x')=\frac{1}{(2\pi)^{d-1}}\int e^{ix'\cdot\xi'}B(\mu,|\xi'|x_d)\hat{\varphi}(\xi')\,d\xi',\varphi\in\mathcal{S}(\mathbb{R}^{d-1})\right\} . 
\end{align*}
In particular, $\displaystyle u\in E_{\mu}(\Omega)$ is called a \textit{whispering gallery mode}.
\end{definition}
We note that every $\displaystyle u\in E_{\mu}(\Omega)$ solves
\begin{align*}
   (\kappa x_d\Delta+\partial_d+\mu|\nabla_{x'}|)u&=0. 
\end{align*}

We shall also need the following equivalence between the norms of $\varphi\in\mathcal{S}(\mathbb{R}^{d-1})$, and its associated element $\displaystyle u_0\in E_{\mu}(\Omega)$:
\begin{lemma}\label{Equivalence lemma for gallery modes}
    Let $\displaystyle \varphi\in\mathcal{S}(\mathbb{R}^{d-1})$, and $u_0$ be defined by
    \begin{align*}
  u(x_d,x')&=\frac{1}{(2\pi)^{d-1}}\int e^{ix'\cdot\xi'}B(\mu,|\xi'|x)\hat{\varphi}(\xi')\psi\left(\frac{\xi'}{2^{2j}}\right)\,d\xi'
  \end{align*}
  Then, if $\psi,\psi_1,\psi_2\in C_c^{\infty}(\mathbb{R}^{d-1})$ such that $\psi\psi_1=\psi_1$, and $\psi\psi_2=\psi$, there exist constants $C_1,C_2>0$
  \begin{align*}
    C_1\left\|\psi_1\left(\frac{D_{x'}}{2^{2j}}\right)\varphi\right\|_{L_{x'}^r(\mathbb{R}^{d-1})}\leq (2^{2j})^{-\frac{1}{r}}\left\|\psi\left(\frac{D_{x'}}{2^{2j}}\right)u\left(\frac{s}{2^{2j}},\cdot\right)\right\|_{L_{x}^r(\mathbb{R}_{+}\times\mathbb{R}^{d-1})}\leq C_2\left\|\psi_2\left(\frac{D_{x'}}{2^{2j}}\right)\varphi\right\|_{L_{x'}^r(\mathbb{R}^{d-1})}
  \end{align*}
  Moreoever,
  \begin{align*}
      \left\|x_d^{\frac{1}{2\kappa}}\psi\left(\frac{D_{x'}}{2^{2j}}\right)u\right\|_{L_x^2(\mathbb{R}_{+}\times\mathbb{R}^{d-1})}&\approx (2^{2j})^{-\frac{1}{2\kappa}-\frac{1}{2}}\left\|\psi\left(\frac{D_{x'}}{2^{2j}}\right)\varphi\right\|_{L_{x'}^2(\mathbb{R}^{d-1})}\\
      \left\|x_d^{\frac{1}{2\kappa}}\psi\left(\frac{D_{x'}}{2^{2j}}\right)\nabla_xu\right\|_{L_x^2(\mathbb{R}_{+}\times\mathbb{R}^{d-1})}&\approx (2^{2j})^{-\frac{1}{2\kappa}+\frac{1}{2}}\left\|\psi\left(\frac{D_{x'}}{2^{2j}}\right)\varphi\right\|_{L_{x'}^2(\mathbb{R}^{d-1})}
  \end{align*}
  
\end{lemma}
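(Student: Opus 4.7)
The plan is to reduce both the general $L^r$ statement and the weighted $L^2$ identities to standard Fourier multiplier estimates on $\mathbb{R}^{d-1}$ after rescaling to the natural wave-packet coordinates $(x_d, x') = (s\cdot 2^{-2j}, y'\cdot 2^{-2j})$. Setting $\Phi(y') := \varphi(y'/2^{2j})$, a direct change of variables in the definition of $u$ gives
\[
\psi(D_{x'}/2^{2j})\,u(s/2^{2j}, y'/2^{2j}) = (T_s \Phi)(y'),
\]
where $T_s$ is the Fourier multiplier on $\mathbb{R}^{d-1}$ with symbol $m_s(\eta) := B(\mu, s|\eta|)\psi(\eta)^2$. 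The Jacobians from the rescaling absorb cleanly into the factor $(2^{2j})^{-1/r}$ appearing in the statement, so the lemma reduces to showing that $T:\Phi \mapsto T_\cdot\Phi$ is bounded and has a bounded left-inverse between suitable $L^r$ spaces.

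For the upper bound I would use Young's convolution inequality. Since $\psi$ is compactly supported and $B(\mu, y) = e^{-y}L^{1/\kappa - 1}_{(\mu-1)/(2\kappa)}(2y)$ decays exponentially at infinity, the kernel $K(s, z') := \mathcal{F}_{\eta\to z'}^{-1}[m_s](z')$ is Schwartz in $z'$ for each $s$, with $\|K(s,\cdot)\|_{L^1_{z'}}\in L^r_s(\mathbb{R}_+)$; Minkowski's integral inequality then yields $\|T\Phi\|_{L^r_{(s,y')}} \lesssim \|\Phi\|_{L^r_{y'}}$, and the relation $\psi\psi_2=\psi$ inserts the outer $\psi_2$ cutoff. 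For the lower bound I would invert the action of $T_s$ by averaging in $s$: pick $\chi \in C_c^\infty((0,\infty))$ supported near some $s_0>0$ for which $r\mapsto B(\mu, s_0 r)$ is nonvanishing on a neighborhood of the radial projection of $\supp\psi_1$. This is possible because $B(\mu,\cdot)$ is entire with only isolated zeros (indeed, simply choosing $s_0$ small makes $B(\mu, s_0 r)\approx B(\mu, 0)\neq 0$). Then $G(r):=\int_0^\infty \chi(s) B(\mu, sr)\,ds$ is smooth and nonvanishing on $\supp\psi_1$, so the Fourier multiplier $N$ with symbol $\psi_1(\eta)/(G(|\eta|)\psi(\eta)^2)$ is smooth and compactly supported, hence $L^r$-bounded, and the identity
\[
\psi_1(D_{y'})\Phi = N\int_0^\infty \chi(s)\,(T_s\Phi)\,ds
\]
combined with Minkowski's integral inequality gives the lower bound.

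The weighted $L^2$ identities follow directly from Plancherel in $x'$ at fixed $x_d$ together with the change of variables $y=|\xi'|x_d$:
\begin{align*}
\int_0^\infty x_d^{1/\kappa}\,|B(\mu,|\xi'|x_d)|^2\,dx_d = |\xi'|^{-1/\kappa - 1}\,I(\mu),\qquad I(\mu):=\int_0^\infty y^{1/\kappa}|B(\mu,y)|^2\,dy<\infty.
\end{align*}
On $\supp \psi(\xi'/2^{2j})$ we have $|\xi'|\sim 2^{2j}$, which after a square root produces the claimed factor $(2^{2j})^{-1/(2\kappa)-1/2}$. The $\nabla_x u$ version is analogous: the tangential piece picks up $|\xi'|^2\sim (2^{2j})^2$, while the normal piece is controlled by the finite integral $I'(\mu):=\int_0^\infty y^{1/\kappa}|B'(\mu,y)|^2\,dy$, both producing the scaling $(2^{2j})^{-1/(2\kappa)+1/2}$; finiteness of $I$ and $I'$ is just the finiteness of the quadratic form $Q$ on the eigenfunctions, and uses the exponential decay of $B$ at infinity together with the explicit polynomial/hypergeometric behavior described in Section \ref{s:Whispering gallery modes}. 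The main obstacle is the lower bound: one must verify that the averaged multiplier $G$ stays bounded away from zero uniformly on $\supp\psi_1$ and that the inverse multiplier $N$ has norm independent of $j$, both of which rely essentially on the analytic structure of $B(\mu,\cdot)$ from the previous section.
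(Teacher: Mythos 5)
Your proposal is correct and follows essentially the same route as the paper: rescale to unit scale, view $B(\mu,s|\eta|)\psi(\eta)$ as a family of Fourier multipliers whose boundedness gives the upper bound and whose non-vanishing near $s=0$ (from $B(\mu,0)>0$ and continuity) gives the reverse bound, and obtain the weighted $L^2$ identities from Plancherel plus the substitution $y=|\xi'|x_d$. The only cosmetic difference is that you average in $s$ against $\chi$ \emph{before} inverting a single multiplier $\psi_1/G$, whereas the paper inverts $\psi_1/B(\mu,|\eta|s)$ for each fixed $s$ in an interval $[a,b]$ and then integrates; both hinge on the same facts about $B$.
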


\begin{proof}
We have 
  \begin{align*}
  \psi(2^{-2j}D_{x'})u(x_d,x')&=\frac{1}{(2\pi)^{d-1}}\int e^{ix'\cdot\xi'}B(\mu,|\xi'|x_d)\hat{\varphi}(\xi')\psi\left(\frac{\xi'}{2^{2j}}\right)\,d\xi'\\
  &=\frac{2^{2j(d-1)}}{(2\pi)^{d-1}}\int e^{i2^{2j}x'\cdot\eta}B(\mu,2^{2j}|\eta|x_d)\hat{\varphi}(2^{2j}\eta)\psi\left(\eta\right)\,d\eta
  \end{align*}
  By making the change of variables $\displaystyle x_d=s2^{-2j}$, we reduce the inequality to proving that
  \begin{align*}
    C_1\left\|\psi_1\left(\frac{D_{x'}}{2^{2j}}\right)\varphi\right\|_{L_{x'}^r(\mathbb{R}^{d-1})}\leq \left\|\psi\left(\frac{D_{x'}}{2^{2j}}\right)u(2^{-2j}s,\cdot)\right\|_{L_{x}^r(\mathbb{R}_{+}\times\mathbb{R}^{d-1})}\leq C_2\left\|\psi_2\left(\frac{D_{x'}}{2^{2j}}\right)\varphi\right\|_{L_{x'}^r(\mathbb{R}^{d-1})}
  \end{align*}
  We may assume that $\displaystyle\supp\,\psi\subset\{\eta\in\mathbb{R}^{d-1}|0<c_0\leq|\eta|\leq c_1\}$.

  We also have
  \begin{align*}
      \psi(2^{-2j}D_{x'})u(2^{2j}s,x')&=\frac{2^{2j(d-1)}}{(2\pi)^{d-1}}\int e^{i2^{2j}x'\cdot\eta}B(\mu,|\eta|s)\hat{\varphi}(2^{2j}\eta)\psi\left(\eta\right)\,d\eta
  \end{align*}

  We know that $B(\mu,0)>0$. Thus, there exists an interval $[a,b]$ such that if $s\in[a,b]$, $B(\mu,|\eta|s)\geq \delta>0$ for some constant $\delta$, and every $\eta\in\supp\,\psi$, hence here $\displaystyle B(\mu,|\eta|s)$ is positive and bounded. As in \cite{Ivanovici}, it follows that
  \begin{align*}
      \|\psi_1(2^{-2j}D_{x'})\psi(2^{-2j}D_{x'})\varphi\|_{L_{x'}^r(\mathbb{R}^{d-1})}\leq C_1\|\psi(2^{-2j}D_{x'})\varphi\|_{L_s^rL_{x'}^r([a,b]\times\mathbb{R}^{d-1})},
  \end{align*}
  where
  \begin{align*}
      C_1&=\frac{\sup_{\eta}|\psi_1(\eta)|}{\inf_{c\in[a,b]}|B(\mu,c)|},
  \end{align*}
  which implies that
  \begin{align*}
      \|\psi_1(2^{-2j}D_{x'})\varphi\|_{L_{x'}^r(\mathbb{R}^{d-1})}\leq C_1\|\psi(2^{-2j}D_{x'})\varphi\|_{L_s^rL_{x'}^r(\mathbb{R}_{+}\times\mathbb{R}^{d-1})}.
  \end{align*}

  The second inequality is immediate, as $\psi=\psi\psi_2$, along with the fact that $\displaystyle |B(\mu,c)|)$ is bounded immediately imply that
  \begin{align*}
      \|\psi(2^{-2j}D_{x'})\varphi\|_{L_s^rL_{x'}^r(\mathbb{R}_{+}\times\mathbb{R}^{d-1})}\leq C_2\|\psi_2(2^{-2j}D_{x'})\psi(2^{-2j}D_{x'})\varphi\|_{L_{x'}^r(\mathbb{R}^{d-1})},
  \end{align*}
  where
  \begin{align*}
      C_2=\sup_{\eta}|\psi(\eta)|\sup_{c}|B(\mu,c)|
  \end{align*}

  For the second estimate, Plancherel's Theorem implies that
  \begin{align*}
      \left\|x_d^{\frac{1}{2\kappa}}\psi\left(\frac{D_{x'}}{2^{2j}}\right)u\right\|^2_{L_x^2(\mathbb{R}_{+}\times\mathbb{R}^{d-1})}&\approx\int_0^\infty \int_{\mathbb{R}^{d-1}}x_d^{\frac{1}{\kappa}}\left|B(\mu,|\xi'|x_d)\hat{\varphi}(\xi')\psi\left(\frac{\xi'}{2^{2j}}\right)\right|^2\,d\xi'\,dx_d\\
      &= \int_{\mathbb{R}^{d-1}}\int_0^\infty x_d^{\frac{1}{\kappa}}(B(\mu,|\xi'|x_d))^2\,dx_d\left|\psi\left(\frac{\xi'}{2^{2j}}\right)\hat{\varphi}(\xi')\right|^2\,d\xi'\\
      &=\int_{\mathbb{R}^{d-1}}|\xi'|^{-\frac{1}{\kappa}}\int_0^\infty (x_d|\xi'|)^{\frac{1}{\kappa}}(B(\mu,|\xi'|x_d))^2\,dx_d\left|\psi\left(\frac{\xi'}{2^{2j}}\right)\hat{\varphi}(\xi')\right|^2\,d\xi'\\
      &=\int_{\mathbb{R}^{d-1}}|\xi'|^{-\frac{1}{\kappa}-1}\int_0^\infty y_d^{\frac{1}{\kappa}-1}(B(\mu,y_d))^2\,dy_d\left|\psi\left(\frac{\xi'}{2^{2j}}\right)\hat{\varphi}(\xi')\right|^2\,d\xi'\\
      &\approx (2^{2j})^{-\frac{1}{\kappa}-1}\left\|\psi\left(\frac{D_{x'}}{2^{2j}}\right)\varphi\right\|^2_{L_{x'}^2(\mathbb{R}^{d-1})}
  \end{align*}
  Therefore,
  \begin{align*}
      \left\|x_d^{\frac{1}{2\kappa}}\psi\left(\frac{D_{x'}}{2^{2j}}\right)u\right\|_{L_x^2(\mathbb{R}_{+}\times\mathbb{R}^{d-1})}&\approx (2^{2j})^{-\frac{1}{2\kappa}-\frac{1}{2}}\left\|\psi\left(\frac{D_{x'}}{2^{2j}}\right)\varphi\right\|_{L_{x'}^2(\mathbb{R}^{d-1})}
  \end{align*}
  The last estimate is analogous.
\end{proof}

\begin{remark}\label{Reduction}
     Let $\displaystyle\varphi_0\in\mathcal{S}(\mathbb{R}^{d-1})$, and $u_0\in E_{\mu}(\Omega)$ be such that
     \begin{align*}
         u_0(x_d,x')&=\frac{1}{(2\pi)^{d-1}}\int e^{ix'\cdot \xi'}B(\mu,|\xi'|x_d)\hat{\varphi}(\xi')\,d\xi'.
     \end{align*}
     In order to prove Theorem \ref{Estimates for gallery modes}, we may reduce the problem to the study of Strichartz type estimates for a problem with initial data $\varphi_0$. More precisely,  if u solves 
     \begin{align*}
         (\partial_t^2-\kappa x_d\Delta-\partial_d)u&=0\\
         (u,\partial_tu)(0)&=(P_{x',2^{2j}}u_0,0)
     \end{align*} then in order to show that the gallery modes satisfy the estimates from Theorem \ref{Estimates for gallery modes}, it suffices to show that the solution to
     \begin{align*}
         (\partial_t^2+\mu|\nabla_{x'}|)\varphi&=0\\
         (\varphi,\partial_t\varphi)(0)=(P_{x',2^{2j}}\varphi_0,0)
     \end{align*}
     satisfies
     \begin{align*}
   \|f\|_{L_t^qL_{x'}^r([0,T]\times\mathbb{R}^{d-1})}\lesssim 2^{2j\left(\frac{3(d-1)}{4}\right)\left(\frac{1}{2}-\frac{1}{r}\right)}\|P_{x',2^{2j}}f_0\|_{L_{x'}^2(\mathbb{R}^{d-1})},
\end{align*}
\end{remark}
    \begin{proof}
        Let $\varphi$ be the solution of the initial value problem
        \begin{align*}
         (\partial_t^2+\mu|\nabla_{x'}|)\varphi&=0\\
         (\varphi,\partial_t\varphi)(0)=(P_{x',2^{2j}}\varphi_0,0)
     \end{align*}
     We define
     \begin{align*}
         u(t,x)&=\frac{1}{(2\pi)^{d-1}}\int e^{ix'\cdot \xi'}B(\mu,|\xi'|x_d)\hat{\varphi}(t,\xi')\,d\xi'.
     \end{align*}
     Here, $\hat{\varphi}$ is the Fourier transform with respect to the tangential variables.
     
     We claim that $u$ is the solution to the problem
     \begin{align*}
         (\partial_t^2-\kappa x_d\Delta-\partial_d)u&=0\\
         (u,\partial_tu)(0)&=(P_{x',2^{2j}}u_0,0).
     \end{align*} 
     Indeed, by using that $\displaystyle (\kappa x_d\partial_d^2+\partial_d)B(\mu,|\xi'|x_d)=(\kappa x_d|\xi'|^2-\mu|\xi'|)B(\mu,|\xi'|x_d)$, we find that
     \begin{align*}
         (\partial_t^2-\kappa x_d\Delta_{x'}-\kappa x_d\partial_d^2-\partial_d)u&=\frac{1}{(2\pi)^{d-1}}\int e^{ix'\cdot \xi'}B(\mu,|\xi'|x_d)(\partial_t^2\hat{\varphi}(t,\xi')+\mu|\xi'|\hat{\varphi})\,d\xi'=0.
     \end{align*}
     Here, we have used the fact that \begin{align*}
       (\partial_t^2+\mu|\nabla_{x'}|)\varphi=0,  \end{align*} which implies that \begin{align*} \partial_t^2\hat{\varphi}(t,\xi')+\mu|\xi'|\hat{\varphi}=0.
       \end{align*}
       The initial data conditions are also clearly true.
       
       The rest of the proof immediately follows from Lemma \ref{Equivalence lemma for gallery modes}.
    \end{proof}

\section{Proof of Theorem \ref{Estimates for gallery modes}}\label{s:Proof of Theorem 3}

From Corollary \ref{Reduction}, we know that it suffices to prove the following

\begin{proposition}
    If $f$ is a solution of
    \begin{align*}
(\partial_t^2+\mu|\nabla_{x'}|)f&=0\\
(f,\partial_tf)(0)&=(P_{x', 2^{2j}}f_0,0),
\end{align*}
then
\begin{align*}
    \|f\|_{L_t^qL_{x'}^r}&\lesssim \|P_{x', 2^{2j}}f_0\|_{L_{x'}^2}.
\end{align*}
\end{proposition}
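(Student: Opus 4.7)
The plan is to prove the estimate via a dispersive estimate combined with the Keel--Tao $TT^*$ machinery. First, via the tangential Fourier transform, I write the solution as
$$
f(t,x') = \tfrac12\bigl(U_+(t)+U_-(t)\bigr) P_{x',2^{2j}}f_0, \qquad U_\pm(t) := e^{\pm it\sqrt{\mu|\nabla_{x'}|}},
$$
so that it suffices to bound each $U_\pm(t)P_{x',2^{2j}}$ from $L^2_{x'}$ to $L^q_tL^r_{x'}$, since each $U_\pm(t)$ is an $L^2$ isometry by Plancherel.

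Next, I derive a dispersive $L^1\to L^\infty$ estimate for $U_\pm(t)P_{x',2^{2j}}$. Its kernel is
$$
K_\pm(t,x') = \frac{1}{(2\pi)^{d-1}}\int e^{i(x'\cdot\xi'\pm t\sqrt{\mu|\xi'|})}\,\chi(\xi'/2^{2j})\,d\xi',
$$
where $\chi$ is a Littlewood--Paley bump; the rescaling $\xi'=2^{2j}\eta$ transforms the phase into $2^{2j}x'\cdot\eta \pm t\sqrt{\mu}\,2^j|\eta|^{1/2}$, so the effective oscillation parameter is $t\cdot 2^j$. A direct computation shows that on $\{|\eta|\sim 1\}$ the Hessian of $|\eta|^{1/2}$ has one negative radial eigenvalue and $d-2$ positive tangential eigenvalues, all of size comparable to unity, so the classical stationary phase expansion (with integration by parts for non-stationary $x'$) yields
$$
|K_\pm(t,x')|\lesssim 2^{2j(d-1)}\bigl(1+t\cdot 2^j\bigr)^{-(d-1)/2}\lesssim 2^{3j(d-1)/2}\,t^{-(d-1)/2}
$$
for $t\gtrsim 2^{-j}$, together with the trivial $L^1\to L^\infty$ bound $2^{2j(d-1)}$ for smaller $t$.

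Combined with the $L^2$ isometry property, this is the abstract Keel--Tao setup with decay exponent $\sigma=(d-1)/2$, and the hypothesis $1/q=\frac{d-1}{2}(1/2-1/r)$ is exactly the admissibility condition $1/q=\sigma(1/2-1/r)$. The $TT^*$ argument, reduced via $U_\pm(t)U_\pm^*(s)=U_\pm(t-s)$ to a Hardy--Littlewood--Sobolev inequality in the time integral, then yields
$$
\|U_\pm(t)P_{x',2^{2j}} g\|_{L^q_tL^r_{x'}} \lesssim 2^{3j(d-1)(1/2-1/r)/2}\|g\|_{L^2_{x'}},
$$
and summing in $\pm$ gives the claimed bound (with the $j$-dependent constant either absorbed into $\lesssim$ or displayed explicitly as in Theorem~\ref{Estimates for gallery modes}, after unwinding the reduction in Remark~\ref{Reduction}).

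The main obstacle will be the endpoint $q=2$, which occurs when $d\geq 4$ and is also the only relevant case when $d=3$; there the Hardy--Littlewood--Sobolev step fails and one must instead invoke the full Keel--Tao endpoint atomic decomposition argument. Away from this endpoint the proof is a textbook stationary phase plus $TT^*$ reduction, the essential geometric input being the non-degeneracy of the Hessian of $|\eta|^{1/2}$, which identifies $\partial_t^2+\mu|\nabla_{x'}|$ as a genuinely dispersive model at each localized tangential frequency scale $2^{2j}$.
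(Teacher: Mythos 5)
Your proof follows essentially the same route as the paper's: after rescaling $\xi'=2^{2j}\eta$, a stationary phase argument exploiting the nondegenerate Hessian of $|\eta|^{1/2}$ gives the dispersive bound $\lesssim t^{-(d-1)/2}2^{3j(d-1)/2}$, which is then converted to the mixed-norm estimate by a $TT^*$ argument (the paper invokes Ivanovici's semiclassical Strichartz lemma, which requires $q>2$, rather than Keel--Tao directly). Your constant $2^{3j(d-1)(\frac{1}{2}-\frac{1}{r})/2}$ matches the paper's $2^{2j\cdot\frac{3(d-1)}{4}\left(\frac{1}{2}-\frac{1}{r}\right)}$, and the $q=2$ endpoint you flag is indeed not covered by the paper's cited lemma either.
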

\begin{proof}
Let us first take $\displaystyle f_0=\varphi_0\in\mathcal{S}(\mathbb{R}^{d-1})$. The solution to our problem  has the form
\begin{align*}
    f&=\frac{1}{(2\pi)^{d-1}}\int e^{ix'\cdot \xi'}e^{-it\mu^{\frac{1}{2}}|\xi'|^{\frac{1}{2}}}\psi\left(\frac{\xi'}{2^{2j}}\right)\hat{f}_0(\xi')\,d\xi'
\end{align*}
This can be rewritten as

\begin{align*}
    f&=\frac{1}{(2\pi)^{d-1}}\int e^{ix'\cdot \xi'}e^{-it\mu^{\frac{1}{2}}|\xi'|^{\frac{1}{2}}}\psi\left(\frac{\xi'}{2^{2j}}\right)\hat{f}_0(\xi')\,d\xi'\\
    &=\frac{2^{2j(d-1)}}{(2\pi)^{d-1}}\int e^{i2^{2j}x'\cdot \eta}e^{-it2^{j}\mu^{\frac{1}{2}}|\eta|^{\frac{1}{2}}}\psi\left(\eta\right)\hat{f}_0(2^{2j}\eta)\,d\eta\\
    &=\frac{2^{2j(d-1)}}{(2\pi)^{d-1}}\int e^{it2^{2j}\left(\frac{x'}{t}\cdot \eta-2^{-j}\mu^{\frac{1}{2}}|\eta|^{\frac{1}{2}}\right)}\psi\left(\eta\right)\hat{f}_0(2^{2j}\eta)\,d\eta
\end{align*}
Let
\begin{align*}
G(j,\eta)&=2^{-j}\mu^{\frac{1}{2}}|\eta|^{\frac{1}{2}}\,,\lambda=t2^{2j}\,,z=\frac{x'}{t}\\
    J(z,j,\lambda)&:=\int e^{i\lambda(z\cdot\eta-G(j,
    \eta))}\psi(\eta)\,d\eta\\
    \gamma_{d-1}(j,\lambda)&=\sup_{\substack{z\in\mathbb{R}^{d-1}}}|J(z,j,\lambda)|
\end{align*}

When $\lambda$ is small, the bounds follow immediately. For large $\lambda$, we are going to apply the following
\begin{lemma}[Lemma 3.3,\cite{TaoNotes}]\label{Stationary phase}
    Let $a$ be a bump function, and let $\displaystyle \phi:\mathbb{R}^d\rightarrow\mathbb{R}$ be smooth and have a stationary point at $x_0$ with $\displaystyle\det(\nabla^2\phi(x_0))\neq 0$. If $\phi$ has no other stationary points on the support of $a$, then there
exist constants $c_0$, $c_1$,$\dots$ with each $c_n$ depending (in some explicit fashion) only on finitely many derivatives of $a$, $\phi$ at $x_0$, such that we have the asymptotic formula
\begin{align*}
    \int_{\mathbb{R}^d} e^{i\lambda \phi(x)}a(x)\,dx&=\sum_{\substack{n=0}}^N c_n\lambda^{-n-\frac{d}{2}}+O_{a,\phi,\lambda}(\lambda^{-N-\frac{d}{2}-1}),
\end{align*}
for all $N\geq 0$. Furthermore,
\begin{align*}
    c_0=e^{\frac{i\pi\sgn(\nabla^2\phi(x_0))}{4}}e^{i\lambda\phi(x_0)}\sqrt{\frac{2\pi}{|\det\nabla^2\phi(x_0)|}}a(x_0).
\end{align*}
\end{lemma}

The critical point of the phase is given by
\begin{align*}
    z&=2^{-j}\mu^{\frac{1}{2}}\frac{\eta}{2|\eta|^{\frac{3}{2}}},
    \end{align*}
    while the Hessian of $G$ at the critical point is
    \begin{align*}
    (\nabla^2G(j,\eta))_{ij}&=\frac{2^{-j}\mu^{\frac{1}{2}}}{2|\eta|^{\frac{3}{2}}}\left(\delta_{ij}-\frac{3\eta_i\eta_j}{2|\eta|^2}\right),
\end{align*}
which is nondegenerate. Lemma \ref{Stationary phase} implies that 
\begin{align*}
    J(z,j,\lambda)&\approx \left(\frac{2\pi}{\lambda^{\frac{1}{2}}}\right)^{d-1}\frac{e^{-\frac{i\pi}{4}\sgn \nabla^2G(\eta(z))}}{\sqrt{|\det \nabla^2G(\eta(z))|}}\psi(\eta(z)),
    \end{align*}
    hence
    \begin{align*}
    |J(z,j,\lambda)|&\approx (t2^{2j})^{-\frac{d-1}{2}}2^{\frac{j(d-1)}{2}}|\eta(z)|^{\frac{3(d-1)}{4}}, 
\end{align*}
and as $\eta$ is in $\supp\psi$ (which is a neighbourhood of a
the unit sphere) and away from $0$, 
\begin{align*}
  |J(z,j,\lambda)|&\lesssim t^{-\frac{d-1}{2}}2^{-j\left(\frac{d-1}{2}\right)}
\end{align*}
This immediately implies that we may take
\begin{align*}
  \gamma_{d-1}(j,\lambda):= t^{-\frac{d-1}{2}}2^{-j\left(\frac{d-1}{2}\right)}
\end{align*}
We are now going to use the following
\begin{lemma}\label{Strichartz lemma}[Lemma 2.2,\cite{Ivanovici}]
Let $\alpha\geq $ and $(q,r)$ be such that $\displaystyle \frac{1}{q}+\frac{\alpha}{r}=\frac{\alpha}{2}$, with $q>2$, in dimension $n$. Let $\displaystyle \beta=(n-\alpha)\left(\frac{1}{2}-\frac{1}{r}\right)$. If the solution $\displaystyle u=e^{-\frac{it}{h}G}\psi(hD)u_0$ of the initial value problem
\begin{align*}
    \frac{h}{i}\partial_tu-G\left(\frac{hD}{i}\right)u&=0\\
    u_{|t=0}&=\psi(hD)u_0
\end{align*}satisfies the dispersive estimates
\begin{align*}
    \left\|e^{-\frac{it}{h}G}\psi(hD)u_0\right\|_{L_x^\infty(\mathbb{R}^n)}&\lesssim\frac{1}{(2\pi h)^n}\gamma_{n,h}\left(\frac{t}{h}\right)\|\psi(hD)u_0\|_{L_x^1(\mathbb{R}^n)}
\end{align*} for some function $\displaystyle \gamma_{n,h}:\mathbb{R}\rightarrow[0,\infty)$ , and every $\displaystyle t\in(0,T_0]$, then there exists some $C>0$ independent of $h$, such that the following inequality holds
\begin{align*}
   h^\beta \left\|e^{-\frac{it}{h}G}\psi(hD)u_0\right\|_{L_t^qL_x^r((0,T_0]\times\mathbb{R}^n)}&\leq C\left(\sup_{\substack{s\in\left(0,\frac{T_0}{h}\right)}}s^\alpha\gamma_{n,h}(s)\right)^{\frac{1}{2}-\frac{1}{r}}\|u_0\|_{L_x^2(\mathbb{R}^n)}
\end{align*}
\end{lemma}
\begin{align*}
    \|f(t)\|_{L_{x'}^{\infty}(\mathbb{R}^{d-1})}&\lesssim 2^{2j(d-1)}\gamma_{d-1}(j,\lambda)\|f_0\|_{L_{x'}^1(\mathbb{R}^{d-1})}\\
    &\lesssim t^{-\frac{d-1}{2}}2^{j\left(\frac{3(d-1)}{2}\right)}\|f_0\|_{L_{x'}^1(\mathbb{R}^{d-1})}
\end{align*}
By interpolation,
\begin{align*}
    \|f(t)\|_{L_{x'}^{r}(\mathbb{R}^{d-1})}&\lesssim t^{-\frac{d-1}{2}\left(1-\frac{2}{r}\right)}2^{j\left(\frac{3(d-1)}{2}\right)\left(1-\frac{2}{r}\right)}\|f_0\|_{L_{x'}^{r'}(\mathbb{R}^{d-1})}
\end{align*}
Lemma \ref{Strichartz lemma} now implies that
\begin{align*}
   \|f\|_{L_t^qL_{x'}^r([0,T]\times\mathbb{R}^{d-1})}\lesssim 2^{2j\left(\frac{3(d-1)}{4}\right)\left(\frac{1}{2}-\frac{1}{r}\right)}\|P_{x',2^{2j}}f_0\|_{L_{x'}^2(\mathbb{R}^{d-1})},
\end{align*}
which finishes the proof.
\end{proof}
Now, Lemma \ref{Equivalence lemma for gallery modes}, implies that
\begin{align*}
   \|f\|_{L_t^qL_{x}^r([0,T]\times((\mathbb{R}_+\times\mathbb{R}^{d-1})))}\lesssim (2^{2j})^{\left(\frac{3d+1}{4}\right)\left(\frac{1}{2}-\frac{1}{r}\right)+\frac{1}{2\kappa}-1}\|(0,\nabla_xP_{x',2^{2j}}f_0)\|_{\mathcal{H}},
\end{align*}
which finishes the proof of Theorem \ref{Estimates for gallery modes}.
\section{Construction of our counterexamples to the Strichartz estimates}\label{s:Construction of our counterexamples to the Strichartz estimates}

In this section, we construct the functions that are going to serve as our counterexamples to the classical Strichartz for the wave equation.

 Let $\displaystyle a\in C_0^\infty(\mathbb{R}^d)$ be an even function with 
\begin{align*}\supp\, a \in\left\{\tilde{\xi'}|1-\varepsilon(\kappa)\leq|\tilde{\xi'}|\leq 1+\varepsilon(\kappa)\right\}.
\end{align*}

For $j\geq 0$, we define
\begin{align*}
        U^j(t,x_d,x')&=\frac{1}{(2\pi)^{d-1}}e^{it2^j}\int e^{ix'\cdot\xi'}B\left(\frac{2^{2j}}{|\xi'|},|\xi'|x_d\right)a(2^{-2j}\xi')\,d\xi' 
    \end{align*}
We note that
\begin{align*}
    U^j(t,x_d,x')&=\frac{1}{(2\pi)^{d-1}}e^{it2^j}\int e^{ix'\cdot\xi'}B\left(\frac{2^{2j}}{|\xi'|},|\xi'|x_d\right)a(2^{-2j}\xi')\,d\xi'\\
    &=\frac{2^{2j(d-1)}}{(2\pi)^{d-1}}e^{it2^j}\int e^{i2^{2j}x'\cdot\eta}B\left(\frac{1}{|\eta|},|\eta|2^{2j}x_d\right)a(\eta)\,d\eta\\
    &=2^{2j(d-1)}U^0(2^{j}t,2^{2j}x)
\end{align*}
It is clear that $U^j$ is an exact solution to our wave equation
\begin{align*}
    (\partial_t^2-\kappa x_d\Delta_{x'}-\kappa x_d\partial_d^2-\partial_d)U^j&=0
\end{align*}

\begin{proposition}\label{Wave packet norm}
 Let $U^j$ be defined as above. Then, for $\displaystyle t\in[0,1]$, we have $\displaystyle \|U^j(t,\cdot)\|_{L_x^r(\Omega)}\simeq (2^{2j})^{d-1-\frac{d}{r}}$, uniformly in $t$. Moreover, $\displaystyle \|U^j\|_{\mathcal{H}}\approx 2^{2j\left(\frac{d}{2}-\frac{1}{\kappa}\right)}$.
\end{proposition}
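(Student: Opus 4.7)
The plan is to reduce everything to the base profile $U^0$ via the scaling identity $U^j(t,x) = 2^{2j(d-1)} U^0(2^j t, 2^{2j} x)$ already recorded in the excerpt. Writing $U^j(t,x) = e^{it 2^j} V^j(x)$ so that $V^j(x) = 2^{2j(d-1)} V^0(2^{2j}x)$, we immediately see that $|U^j(t,x)| = |V^j(x)|$ is independent of $t$, which delivers the uniformity in $t\in [0,1]$ in the $L^r$ estimate without any further work; the entire problem reduces to understanding $V^0$.

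For the $L^r$ bound, I would perform the change of variables $y = 2^{2j}x$ (Jacobian $2^{-2jd}$) to obtain
\[
\|U^j(t,\cdot)\|_{L^r(\Omega)} = 2^{2j\bigl(d-1-\frac{d}{r}\bigr)}\,\|V^0\|_{L^r(\Omega)},
\]
so it suffices to show $\|V^0\|_{L^r(\Omega)} \simeq 1$. For fixed $y_d$, the integrand $a(\eta')B(1/|\eta'|, |\eta'|y_d)$ is smooth and compactly supported in $\eta'$, so repeated integration by parts in $\eta'$ produces rapid decay of $V^0(y_d, y')$ in $|y'|$, uniformly on compact ranges of $y_d$. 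In the normal direction the factor $e^{-|\eta'|y_d}$ appearing in $B(\mu,\cdot)$ supplies exponential decay as $y_d \to \infty$, while $V^0$ is continuous down to $y_d = 0$. Together these give $\|V^0\|_{L^r} \leq C$. For the matching lower bound I would choose $a$ non-negative with $\int a > 0$, so that at $y' = 0$ the integral $V^0(y_d, 0) = \frac{1}{(2\pi)^{d-1}}\int B(1/|\eta'|,|\eta'| y_d)\, a(\eta')\, d\eta'$ is bounded below by a positive constant on a small interval of $y_d$ (using $B(\mu,0) > 0$ and continuity). Continuity of $V^0$ in $y'$ then extends this lower bound to a fixed ball, yielding $\|V^0\|_{L^r} \geq c > 0$.

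For the $\mathcal{H}$ norm, I would compute $\partial_t U^j = i 2^j U^j$ and $\nabla V^j(x) = 2^{2jd}(\nabla V^0)(2^{2j}x)$, and split
\[
\|U^j\|_{\mathcal{H}}^2 = \int x_d^{(1-\kappa)/\kappa}\bigl(|\partial_t U^j|^2 + \kappa x_d|\nabla U^j|^2\bigr)\,dx
\]
into its two pieces. Applying the substitution $y = 2^{2j}x$ turns the weight into $2^{-2j(1-\kappa)/\kappa} y_d^{(1-\kappa)/\kappa}$ and produces a Jacobian $2^{-2jd}$; collecting the $2^{2j}$-factors coming from $|\partial_t U^j|^2 = 2^{2j}|V^j|^2 = 2^{2j}\cdot 2^{4j(d-1)}|V^0(2^{2j}\cdot)|^2$, from $|\nabla V^j|^2 = 2^{4jd}|\nabla V^0(2^{2j}\cdot)|^2$, and from the change of variables gives the announced exponent. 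Finiteness and positivity of the remaining $y$-integrals $\int y_d^{(1-\kappa)/\kappa}|V^0|^2\,dy$ and $\int y_d^{1/\kappa} |\nabla V^0|^2\,dy$ follow from the Schwartz decay in $y'$ and the exponential decay in $y_d$ established above, combined with the pointwise lower-bound argument.

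The main obstacle is not the dimensional bookkeeping, which is essentially mechanical, but the lower bounds on $\|V^0\|_{L^r}$ and on the weighted $L^2$ norms of $(V^0,\nabla V^0)$ appearing in $\mathcal{H}$. Upper bounds follow cheaply from rapid-decay arguments, but to prevent $V^0$ from accidentally vanishing (for instance due to oscillation in $\eta'$ at the center $y' = 0$) one must engineer the symbol $a$ so that $V^0(y_d, 0)$ is a genuine positive superposition of non-negative values of $B$ on a fixed set — the standard wave-packet non-degeneracy trick used in Section \ref{s:Whispering gallery modes}. Once this is in place, the rest of the proof is a careful accounting of powers of $2^{2j}$ under anisotropic dilation.
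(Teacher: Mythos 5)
Your overall strategy is the same as the paper's: both hinge on the anisotropic scaling identity $U^j(t,x)=2^{2j(d-1)}U^0(2^jt,2^{2j}x)$, and your $\mathcal{H}$-norm computation is essentially a transcription of the paper's (which changes variables in $\|x_d^{\frac{1}{2\kappa}-\frac12}\partial_tU^j(0)\|_{L^2}$ and $\|x_d^{\frac{1}{2\kappa}}\nabla_xU^j(0)\|_{L^2}$ separately). Note that the bookkeeping you describe yields the exponent $2^{2j(\frac{d}{2}-\frac{1}{2\kappa})}$, which is what the paper's proof obtains and what is used in Section \ref{s:Proof of Theorems 1 and 2}; the exponent $\frac{d}{2}-\frac{1}{\kappa}$ printed in the proposition statement appears to be a typo, so do not force your computation to match it. The one genuine divergence is the $L^r$ bound: the paper simply invokes the two-sided comparison of Lemma \ref{Equivalence lemma for gallery modes}, whose lower bound works for an arbitrary profile by inverting the multiplier $\eta\mapsto B(\mu,|\eta|s)\psi(\eta)$ on a slab $s\in[a,b]$ where $B$ is bounded below, whereas you re-derive both bounds by hand, obtaining the lower bound from positivity of the symbol $a$. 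Your route is self-contained and buys nothing beyond what the lemma already provides, but it imposes the extra (harmless, since $a$ is at our disposal) requirement $a\geq 0$.

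One step of yours is fragile and should be replaced by a citation rather than asserted: the claim that "the factor $e^{-|\eta'|y_d}$ appearing in $B$ supplies exponential decay as $y_d\to\infty$." Here $B(\mu,z)=e^{-z}L^{\frac{1}{\kappa}-1}_{\frac{\mu-1}{2\kappa}}(2z)$, and for non-integer index the generalized Laguerre function is an entire confluent hypergeometric series which generically grows like $e^{2z}$ times powers, so the exponential prefactor does not by itself yield decay, let alone integrability in $y_d$. The decay and weighted square-integrability of $B$ in the normal variable are exactly the properties of the gallery-mode profile established (or at least asserted) in Section \ref{s:Whispering gallery modes} and used in Lemma \ref{Equivalence lemma for gallery modes}; you should lean on those rather than on the prefactor. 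With that substitution your argument closes and agrees with the paper's.
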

\begin{proof}
From Lemma \ref{Equivalence lemma for gallery modes}, we immediately have
\begin{align*}
    \|U^j(t,x_d)\|_{L_x^r((0,\infty)\times\mathbb{R}^{d-1}))}&\approx (2^{2j})^{-\frac{1}{r}}2^{2j(d-1)}\|\tilde{a}(2^{2j}x')\|_{L_{x'}^r(\mathbb{R}^{d-1})}\\
    &\approx (2^{2j})^{-\frac{1}{r}}(2^{2j})^{d-1-\frac{d-1}{r}}\approx (2^{2j})^{d-1-\frac{d}{r}}
\end{align*}

We also estimate the $\mathcal{H}$-norm of the initial data.
\begin{align*}
    U^j(0)&=\frac{1}{(2\pi)^{d-1}}\int e^{ix'\cdot\xi'}B\left(\frac{2^{2j}}{|\xi'|},|\xi'|x_d\right)a(2^{-2j}\xi')\,d\xi' \\
   \partial_tU^j(0)&=2^{j}\frac{1}{(2\pi)^{d-1}}\int e^{ix'\cdot\xi'}B\left(\frac{2^{2j}}{|\xi'|},|\xi'|x_d\right)a(2^{-2j}\xi')\,d\xi'=2^jU(0,x_d,x')
\end{align*}
 We have
\begin{align*}
\|x_d^{\frac{1}{2\kappa}-\frac{1}{2}}\partial_tU^j(0,x_d,x')\|_{L_x^2(\Omega)}&=2^{2j(d-1)}2^{j}\|x_d^{\frac{1}{2\kappa}-\frac{1}{2}}U^0(0,2^{2j}x_d,2^{2j}x')\|_{L_x^2(\Omega)}\\
&=2^{j}2^{2j(d-1)}2^{2j\left(\frac{1}{2}-\frac{1}{2\kappa}\right)}\|(2^{2j}x_d)^{\frac{1}{2\kappa}-\frac{1}{2}}U^0(0,2^{2j}x_d,2^{2j}x')\|_{L_x^2(\Omega)}\\
&=2^{j}2^{2j(d-1)}2^{2j\left(\frac{1}{2}-\frac{1}{2\kappa}\right)}2^{-jd}\|(y_d)^{\frac{1}{\kappa}-\frac{1}{2}}U^0(0,y_d,y')\|_{L_y^2(\Omega)}\\
&\approx 2^{2j\left(\frac{d}{2}-\frac{1}{2\kappa}\right)}
\end{align*}
Similarly,
\begin{align*}
    \|x_d^{\frac{1}{2\kappa}}\nabla_{x}U^j(0,x_d,x')\|_{L_x^2(\Omega)}&\approx 2^{2j\left(\frac{d}{2}-\frac{1}{2\kappa}\right)},
\end{align*}
hence
\begin{align*}
    \|U^j(0)\|_{\mathcal{H}}&\approx 2^{2j\left(\frac{d}{2}-\frac{1}{2\kappa}\right)}
\end{align*}
This finishes the proof.

\end{proof}

\section{Proof of Theorems \ref{Counterexample Strichartz} and \ref{Counterexample Strichartz 2}}\label{s:Proof of Theorems 1 and 2}
We define
\begin{align*}
    \psi^j=\frac{U^j}{2^{2j\left(\frac{d}{2}-\frac{1}{2\kappa}\right)}};
\end{align*}
For a wave-admissible pair $\displaystyle(q,r)$, we have
\begin{align*}
     \|\psi^j(t,\cdot)\|_{L_t^qL_x^r([0,1]\times\Omega)}\approx  2^{2j\left(d-1-\frac{d}{r}+\frac{1}{2\kappa}-\frac{d}{2}\right)}\approx 2^{2j\left(\frac{1}{q}+\gamma+\frac{1}{2\kappa}-1\right)},
 \end{align*}
 hence
 \begin{align*}
    \|\nabla_x^2\psi^j(t,\cdot)\|_{L_t^qL_x^r([0,1]\times\Omega)}\approx  2^{2j\left(d+1-\frac{d}{r}+\frac{1}{2\kappa}-\frac{d}{2}\right)}\approx 2^{2j\left(\frac{1}{q}+\gamma+\frac{1}{2\kappa}+1\right)}. 
 \end{align*}
It is also clear that
\begin{align*}
    \|(\partial_t\psi^j_0,\nabla_{x}\psi^j_0)\|_{\mathcal{H}}&\approx 1, 
\end{align*}
and Bernstein's inequality also implies that
\begin{align*}
    \|(\partial_t\psi^j_0,\nabla_{x}\psi^j_0)\|_{\mathcal{H}^{2s}}&\approx 2^{2js}. 
\end{align*}

Therefore, whenever $\displaystyle \alpha\in \left(0,\frac{1}{q}+\gamma+\frac{1}{2\kappa}+1-s\right)$,
we have
\begin{align*}
    2^{-2j\alpha}\|\nabla^2\psi^j(t,\cdot)\|_{L_t^qL_x^r([0,1]\times\Omega)}&\gg\|(\partial_t\psi^j_0,\nabla_{x}\psi^j_0)\|_{\mathcal{H}^{2s}},
\end{align*}
which finishes the proof of Theorem \ref{Counterexample Strichartz}.

When $(q,r)$ is instead Euler-admissible, we have
\begin{align*}
     \|\psi^j(t,\cdot)\|_{L_t^qL_x^r([0,1]\times\Omega)}&\approx  2^{2j\left(d-1-\frac{d}{r}+\frac{1}{2\kappa}-\frac{d}{2}\right)}\approx 2^{2j\left(\frac{1}{2q}-\gamma\right)}\\
     \|\nabla^2\psi^j(t,\cdot)\|_{L_t^qL_x^r([0,1]\times\Omega)}&\approx   2^{2j\left(\frac{1}{2q}-\gamma+2\right)},
 \end{align*}
 hence whenever $\displaystyle \alpha\in \left(0,\frac{1}{2q}-\gamma+2-s\right)$,
we have
\begin{align*}
    2^{-2j\alpha}\|\nabla^2\psi^j(t,\cdot)\|_{L_t^qL_x^r([0,1]\times\Omega)}&\gg\|(\partial_t\psi^j_0,\nabla_{x}\psi^j_0)\|_{\mathcal{H}^{2s}},
\end{align*}
We also note that, when $\displaystyle d\geq 3$, and $\displaystyle(q,r)=(2,\infty)$, we need at least $\displaystyle s>k_0+\frac{1}{2}$ (hence $\displaystyle 2s>2k_0+1$) and this suggests that Strichartz estimates can not be used in order to control $\displaystyle\|\nabla_x^2\psi^j\|_{L_t^2L_x^\infty([0,T]\times\Omega)}$ and improve Ifrim and Tataru's local well-posedness result from \cite{Ifrim-Tataru}.

\bibliography{compressible_euler_strichartz.bib}
\bibliographystyle{plain}
\end{document}